% !!!IMPORTANT NOTE: Please read carefully all information including those preceded by % sign
\documentclass{amsproc}
\usepackage{amsmath}
  \usepackage{paralist}
  \usepackage{graphics} %% add this and next lines if pictures should be in esp format
  \usepackage{epsfig} %For pictures: screened artwork should be set up with an 85 or 100 line screen
 \usepackage[colorlinks=true]{hyperref}
\usepackage{amssymb}
   % Warning: when you first run your tex file, some errors might occur, please just
   % press enter key to end the compilation process,  then it will be fine if you run your tex file again.
   % Note that it is highly recommended by AIMS to use this package.
\hypersetup{urlcolor=blue, citecolor=red}

  \textheight=8.2 true in
   \textwidth=5.0 true in
    \topmargin 30pt
     \setcounter{page}{1}

% The next 5 line will be entered by an editorial staff.

 % Please minimize the usage of "newtheorem", "newcommand", and use
 % equation numbers only situation when they provide essential convenience
 % Try to avoid defining your own macros

\newtheorem{theorem}{Theorem}[section]
\newtheorem{corollary}{Corollary}

\newtheorem{lemma}[theorem]{Lemma}
\newtheorem{proposition}{Proposition}

\theoremstyle{definition}

\newtheorem{remark}{Remark}

%% Place the running title of the paper with 40 letters or less in []
 %% and the full title of the paper in { }.
\title[Non-integrability criterium for normal variational equations]
{Non-integrability criterium for normal variational equations around
an integrable subsystem and an example: the Wilberforce
spring-pendulum}

% Place all authors' names in [ ] shown as running head;
% No more than 40 letters. Leave { } empty
% Please use `and' to connect the last two names if applicable
\author[Acosta-Hum\'anez, Alvarez, Bl\'azquez and Delgado]{}

% It is required to enter MSC and Keywords.
\subjclass{Primary: 37J30; Secondary: 12H05 34M45 37J35 70G55 70H06.}
 \keywords{Algebrization, Kovacic's algorithm, Hamiltonian systems, Wilbeforce pendulum, Differential Galois Group.}

% Email address of each of all authors is required.
% You may list email addresses of all other authors, separately.
 \email{pacostahumanez@uninorte.edu.co}
 \email{mar@xanum.uam.mx}
 \email{david.blazquez-sanz@usa.edu.co}
 \email{jdf@xanum.uam.mx}

% Put your short thanks below. For long thanks/acknowlegements,
%please go to the last acknowledgments section.
%\thanks{The first author is supported by NSF grant xx-xxxx}

\begin{document}
\maketitle

% Enter the first author's name and address:
\centerline{\scshape Primitivo B. Acosta-Hum\'anez}
\smallskip
{\footnotesize
% please put the address of the first author
 \centerline{Departamento de Matem\'aticas y Estad\'{\i}stica}
   \centerline{Universidad del Norte}
   \centerline{Barranquilla, Colombia}}
% Do not forget to end the {\footnotesize by the sign }
\medskip

\centerline{\scshape  Martha Alvarez-Ram\'{\i}rez }
\smallskip
{\footnotesize
 % please put the address of the second  and third author
\centerline{ Departamento de Matem\'aticas,}
\centerline{ UAM--Iztapalapa, 09340 Iztapalapa, }
\centerline{M\'exico, D.F., M\'exico}}
\medskip

\centerline{\scshape David Bl\'azquez-Sanz}
\smallskip
{\footnotesize
\centerline{ Universidad Sergio Arboleda,}
\centerline{Calle 74 no. 14-14,}
\centerline{ Bogot\'a, DC.,  Colombia.}}
\medskip

\centerline{\scshape Joaqu\'{\i}n Delgado}
\smallskip
{\footnotesize
\centerline{ Departamento de Matem\'aticas,}
\centerline{ UAM--Iztapalapa, 09340 Iztapalapa, }
\centerline{M\'exico, D.F., M\'exico}}

\bigskip

% The name of the associate editor will be entered by an editorial staff
% "Communicated by the associate editor name" is not needed for special issue.
 \centerline{(Communicated by the associate editor name)}

%The abstract of your paper
\begin{abstract}
In this paper we analyze the non-integrability of the Wilbeforce
spring-pendulum by means of Morales-Ramis theory in where is enough to prove
that the Galois group of the variational equation is not virtually
abelian. We obtain these non-integrability results due to the
algebrization of the variational equation falls into a Heun
differential equation with four singularities and then we apply
Kovacic's algorithm to determine its non-integrability.
\end{abstract}

\section{Introduction}
The Wilberforce pendulum consists of a mass hanging on a flexible spiral spring that is free to oscillate in both the standard
longitudinal mode and the torsional mode.
When the mass is lifted above its equilibrium point and released from rest, it oscillates up and down along a vertical line,
slowly transferring its energy into a rotational oscillation. If the nuts screwed onto the vanes protruding from the sides of the mass are adjusted
to give the appropriate moment of inertia such that the frequencies of the longitudinal mode and the torsional mode are the same,
the pendulum will transfer its energy back and forth completely between these two modes os oscillation, see \cite{berg}.
 We shall refer to the Wilberforce spring-pendulum when the spring is free to swing in a plane, thus adding an extra
 degree of freedom. This system
 displays evidence inherent to chaotic dynamical systems.
Hence, establishing its non-integrability is our goal. The main tool for studying the non-integrability of this kinds of Hamiltonian systems is Morales-Ramis theory.
One important criterion to obtain non-integrability of autonomous Hamiltonian systems by means of differential Galois theory is due to J. J. Morales-Ruiz and J.P Ramis \cite{MoralesRamis2001a},
and Morales-Ruiz \cite{Morales1999}.  The Morales-Ramis theorem  connects two notions: integrability of Hamiltonian systems and integrability of linear systems of equations. In particular, Morales-Ramis theory has been used to prove the non-integrability of spring pendulum systems, see \cite{chdr, mpw, Morales1999}.

This paper is organized as follows. We start with a brief description of the basics
of the Morales-Ramis theory of non-integrability of the Hamiltonian system, geometric objects associated
and  the theory of linear homogeneous system of differential equation with singular points,
following \cite{BlazquezMorales} and \cite{acthesis}.
In the section \ref{wilforce} we present the necessary theoretical background to
apply Morales-Ramis theory to  the Wilberforce spring--pendulum Hamiltonian system.
Finally, the Section \ref{var-eq}  is devoted to a detailed analysis of
apply the Theorem \ref{main} in order to prove that the Wilberforce spring--pendulum Hamiltonian system is non-integrable in terms of meromorphic first integrals.

\section{Variational equations and connections}\label{sec1}
Let ${\bf M}$ be complex analytic manifold and $\vec X$ a meromorphic vector field
on ${\bf M}$. Let us denote by $K$ the field of meromorphic functions
on ${\bf M}$ and $\partial\colon K\to K$ the derivative with
respect to $\vec X$ so that $(K,\partial)$ is a differential field.
From now on we will write $f'$ for $\partial f$ with $f$ an element of $K.$
\medskip

Let $\pi\colon {\bf E}\to {\bf M}$ be a vector bundle, that we assume to be meromorphically
trivial. Let us recall that this always holds in the algebraic case due to
the vanishing of the first Galois cohomology group of the linear group, and
also holds if ${\bf M}$ is an open Riemann surface, due to Brauer theorem.
\medskip

Let $E$ be the space of meromorphic sections of $\pi$.
A \emph{linear connection in the direction of $\vec X$} is a $K^{\partial}$
linear map $\nabla\colon E\to E$ satisfying Leibniz rule
$\nabla(fe) = (\partial f)e + f\nabla(e)$. Let us consider
two vector bundles $({\bf E},\nabla)$ and $({\bf E'},\nabla')$ over
${\bf M}$ endowed with linear connections in the direction of $\vec X$. A morphism
of connections is a $K$-linear map $\phi\colon E\to E'$ such that
$\phi\circ \nabla = \nabla' \circ \phi$.

\medskip

Let $\{e_1,\ldots,e_n\}$ be a basis of $E$. The $n^2$
coefficients $a_{ij}\in K$ in the following expression are
uniquely determined.
$$\nabla(e_i) = \sum_{j=1}^n a_{ij}e_j, \quad i = 1,\dots, n$$ %(1)
Therefore the so-called connection matrix  $A = (a_{ij}) \in {\rm Mat}(n\times n, K)$
determines uniquely $\nabla$.

\medskip

A meromorphic section $e\in E$ is called horizontal if it satisfies $\nabla(e) = 0$. Let
us write $e = \sum_{i=1}^n{x_ie_i}$, ${\bf x} = (x_1,\ldots,x_n)^t$ and $x'=\delta x$ whenever $c\in K^{\delta}$. It follows from
equation (1) that $\bf x$ satisfy the system of linear differential equations,
$${\mathbf x}' = -A^t\mathbf x.$$ %(2)
Therefore, the choice of a linear basis of $E$ over $K$ gives a one-to-one correspondence
between linear connections in $E$ and $n\times n$  systems of linear differential equations with
coefficients in $K$.

\begin{remark}\label{galois}
Let us assume now that $\bf M$ is a Riemann surface $\Gamma$,
the differential Galois theory in this case is developed in \cite{RamisMartinet}, and can be
generalized to the case in which the field $\mathcal C$ of constants of $K$ is the field
of complex numbers $\mathbb C$, id est, the vector field $\vec X$ does not
admit meromorphic first integrals.  To a connection $\nabla$ in the direction of $\vec X$
it corresponds an algebraic group ${\rm Gal}({\bf E},\nabla)$ which is embedded into ${\rm GL}(n,\mathbb C)$
up to a conjugacy class. In the development of our application we will just make use of the following
geometric properties of the differential Galois group.
\begin{itemize}
\item[(1)] An exhaustive morphism of connections $\pi\colon ({\bf E}, \nabla) \to ({\bf\bar E}, \bar\nabla)$ induces
an exhaustive group morphism $\pi_*\colon {\rm Gal}({\bf E},\nabla)\to {\rm Gal}({\bf E}, \bar\nabla)$.
\item[(2)] Let us consider $({\bf E},\nabla) = ({\bf E_1}\oplus {\bf E_2},\nabla_1\oplus\nabla_2)$. Then the canonical projections
induce an injective morphism of algebraic groups, $$\pi_{1*}\times\pi_{2*}\colon {\rm Gal}({\bf E}, \nabla) \to
{\rm Gal}({\bf E_1}, \nabla_1)\times{\rm Gal}({\bf E_2},\nabla_2).$$
\end{itemize}
Those geometric properties and many other were exhaustively studied in the more general case of
the Lie-Vessiot systems in \cite{BlazquezMorales}. The necessary differential Galois theory background
here is given in Appendix A.
\end{remark}

\subsection{Generic variational connection}

Let us consider $\bf M$, $\vec X$, $K$ and $\partial$ as in Section 1.
There are two equivalent ways of introducing the variational equation
for $\vec X$. The vector field $\vec X$ can be naturally prolonged to
the tangent bundle ${\rm T}{\bf M}$ by application of the chain rule as done
in [1]. Equivalently we can consider the Lie bracket with $\vec X$ as a
linear connection. Let ${\mathcal T}{\bf M}$ be the space of meromorphic vector
fields in ${\bf M}$, then
$$\nabla_{\vec X}\colon {\mathcal T}{\bf M}\to {\mathcal T}{\bf M}, \quad \vec Y \mapsto \nabla_{\vec X}\vec Y = [\vec X,\vec Y]$$
is a linear connection in ${\rm T}{\bf M}$ in the direction of $\vec X$.
This is called the first variational connection of $\vec X$ on the generic point
of ${\bf M}$

\subsection{Normal variational connection and equations}

  Let $\bf N$ be an invariant manifold of $\bf M$ with respect to the flow of $\vec X$. The following exact sequence of
vector bundles over $\bf N$,
$$0 \to {\rm T}{\bf N} \to {\rm T}{\bf M}|_{\bf N} \to \mathcal {\rm Norm}({\bf N},{\bf M})\to 0$$
implicitly defines the normal bundle to $\bf N$ in $\bf M$. It is clear that the connection $\nabla_{\vec X}$
restrict to ${\rm T}{\bf M}|_{\bf N}$ and ${\rm T}{\bf N}$ and therefore we have also an
exact sequence of connections.
$$0 \to ({\rm T}{\bf N},\nabla_{\vec X|_{\bf N}})
\to ({\rm T}{\bf M}|_{\bf N},\nabla_{\vec X}|_{\bf N}) \to \mathcal ({\rm Norm}({\bf N},{\bf M}),\bar\nabla)\to 0$$
Let us consider a local system of coordinates $x_1,\ldots,x_r,y_1,\ldots,y_m$
in $\bf M$ such that the equations of $\bf N$ are written,
$$y_1=0, \quad \ldots,\quad y_m=0,$$
and those of $\vec X$,
$$\vec X = \sum_{i=1}^r f_i\frac{\partial}{\partial x_i} + \sum_{i=1}^m g_i\frac{\partial}{\partial y_i}.$$
Then, the equations of horizontal section for $\nabla$, the variational equation along $\vec X$
restricted to $\bf N$ are written,
\begin{equation}
\begin{split} %&\dot x_i = \vec X x_i\quad \\
&\delta x'_i = \sum_{j=1}^n\left.\frac{\partial f_i}{\partial x_j}\right|_N\delta x_j
+ \sum_{j=1}^m\left.\frac{\partial f_i}{\partial y_j}\right|_{N}\delta y_j \\
&\delta y'_i = %\phantom{\sum_{j=1}^n\frac{\partial g_i({\bf x},0)}{\partial x_j}x_j^{(1)}
\sum_{j=1}^m\left.\frac{\partial g_i}{\partial y_j}\right|_{N}\delta y_j
\end{split}
\end{equation}

In this system of coordinates, we can easily project the equations onto the
normal bundle obtaining the equations for horizontal sections of $\bar\nabla$,
\begin{equation}
\begin{split} %&\dot x_i = X x_i,\quad \\
&\delta  y'_i = \sum_{j=1}^m\left.\frac{\partial g_i}{\partial y_j}\right|_{N}\delta y_j
\end{split}
\label{NormalVariationalGeneric}
\end{equation}

Let $\Gamma$ be an invariant curve (Riemann surface) of $\vec X$ contained in $\bf N$. Its
invariance by $\vec X$ means that the connections discussed above specialize to the bundles
restricted to $\Gamma$,
\begin{equation}\label{exact}
0 \to ({\rm T}{\bf N}|_\Gamma,\nabla_{\vec X|_{\bf N}}|_{\Gamma})
\to ({\rm T}{\bf M}|_{\Gamma},\nabla_{\vec X}|_{\Gamma}) \to
({\rm Norm}({\bf N},{\bf M})|_{\Gamma},\bar\nabla|_\Gamma)\to 0
\end{equation}
The connection $\nabla_{\vec X}$ is called the variational connection of $\vec X$ along
the integral curve $\Gamma$, and $\bar\nabla|_\Gamma$ is called the variational connection
of $\vec X$ normal to ${\mathbf N}$ in ${\mathbf M}$ along $\Gamma$. The equations of horizontal sections for
this last one are written
\begin{equation}
\delta  y'_i = \sum_{j=1}^m\left(\vec X|_\Gamma\tau%\frac{\d \tau}{\d t}
\right)^{-1}\left.\frac{\partial g_i}{\partial y_j}\right|_{\Gamma}\delta y_j
\label{NormalVariational}
\end{equation}
a square system of $m$ linear differential equations with meromorphic coefficients in $\Gamma$,
or equivalently a meromorphic linear connection in ${\rm Norm}({\mathbf N}, {\mathbf M})|_{\Gamma}$. This is the so called
\emph{variational equations of the flow of $\vec X$
normal to the invariant manifold ${\bf N}$ along the integral curve $\Gamma$.}

\begin{remark}
It is important to remark that the invariant curve $\Gamma$ may not corresponds to a unique
trajectory of the flow of $\vec X$ but also contain equilibrium points. This is important because
equilibrium points in $\Gamma$ are poles of $(\vec X|_\Gamma\tau)^{-1}$ and
therefore singularities of the system (\ref{NormalVariational}). In many applications
we also replace the original manifold ${\bf M}$ by some suitable compactification such that the considered
invariant curve $\Gamma$ has some equilibrium points at the infinity (see \cite{Morales1999} pp. 70--74).
The curve $\Gamma$ may then contain singular points like nodes or cusps. This is coherent with
the computations above, providing that the field of meromorphic functions on a singular Riemann
surface coincides with the one of its desingularization
\end{remark}

\section{Non-integrability criterium}

\subsection{Morales-Ramis theory}

From now on let $\bf M$ be a complex symplectic manifold, of dimension $2n$
with simplectic form $\Omega$. Let us consider a symplectic system
of coordinates $x_1,\ldots,x_n$, $y_1,\ldots,y_n$. If $H$ is an holomorphic
function in $\bf M$ then the Hamiltonian vector field $\vec X_H$ is defined
intrinsically by the formula,
$$dH + i_{\vec X_H}\Omega = 0,$$
and its expression in local coordinates is,
$$\frac{{\rm d} x_i}{{\rm d} t} = \frac{\partial H}{\partial y_i}, \quad \frac{{\rm d} y_i}{{\rm d} t} = - \frac{\partial H}{\partial x_i}.$$

The symplectic structure induces a \emph{Poisson bracket} for functions defined on $M$, namely
 $\{F, G\} = \vec X_FG$. The functions $F,$ $G$ in $M$ are
said to be in involution if $\{F,G\} = 0$. A Hamiltonian system is called \emph{completely
integrable} by meromorphic functions if it admits $n$ independent meromorphic first
integrals in involution. Let $\Gamma$ be an invariant curve for $\vec X_H$.
Let us define the variational connection $\nabla_{\vec X_H}$ in direction of $\vec X$ as in Section 1.
Then , $({\rm T}{\bf M}|_{\Gamma}, \nabla_{\vec X_H}|_\Gamma)$ is the variational conection
to $\vec X$ along the integral curve $\Gamma$. The equation of horizontal sections, so called
the first variational equation along $\Gamma$ is written in symplectic coordinates,
\begin{equation}\label{VariationalH}
\begin{split}
&\delta x'_i =
\sum_{j=1}^n \left(
\left.\frac{\partial^2 H}{\partial {x_j}\partial {y_i}}\right|_\Gamma \delta x_j +
\left.\frac{\partial H}{\partial {y_j}\partial {y_i}}\right|_\Gamma \delta y_j \right) \\
&\delta y'_i =
- \sum_{j=1}^n \left(
\left. -\frac{\partial^2 H}{\partial {x_j}\partial {x_i}}\right|_\Gamma \delta x_j -
\left.\frac{\partial H}{\partial {y_j}\partial {x_i}}\right|_\Gamma \delta y_j
\right)
\end{split}
\end{equation}

The Morales-Ramis theory,
relates the Liouville integrability of Hamiltonian system and
the Galois groups of differential equations. The following result
(Morales-Ramis  \cite{MoralesRamis2001a}) and subsequent generalizations
are some of the most effective and useful known for the theoretical
proof of non-integrability of Hamiltonian systems. Let us recall that
an algebraic group $G$ is said to be \emph{virtually abelian} it its
connected component of the identity $G^0$ is abelian (see Appendix A).

\begin{theorem}\label{MoralesRamis}%[\cite{MoralesRamis2001a}]
Let $H$ be a Hamiltonian, and $\vec X_H$ its associated Hamiltonian vector field
in ${\mathbf M}$. Let $\Gamma$ be an invariant curve for $H$.
Assume that $H$ is completely integrable by meromorphic
functions independent in a neighborhood of $\Gamma$ but not necessarily
on $\Gamma$ itself. Then the group
${\rm Gal}({\rm T}{\mathbf M}|_\Gamma,\nabla_{\vec X_H}|_\Gamma)$  is virtually abelian.
\end{theorem}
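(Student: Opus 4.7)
The strategy is classical: translate Liouville integrability of $H$ into the existence of many commuting rational first integrals of the variational connection $\nabla_{\vec X_H}|_\Gamma$, and then show that this forces the Galois group to lie in an abelian algebraic subgroup of ${\rm Sp}(2n,\mathbb{C})$. The whole argument rests on the interplay between the Poisson algebra of meromorphic functions on ${\bf M}$ and the Poisson algebra of fiberwise-polynomial functions on ${\rm T}{\bf M}|_\Gamma$.

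First I would establish a linearization principle for first integrals. Given a meromorphic first integral $F$ of $\vec X_H$ defined in a neighborhood of $\Gamma$, I construct a rational first integral $\tilde F$ of the variational connection, obtained as the leading-order jet of $F$ in directions transverse to $\Gamma$. Differentiating the identity $\vec X_H F \equiv 0$ along the fibers of ${\rm T}{\bf M}|_\Gamma$ yields horizontality of the induced polynomial tensor with respect to $\nabla_{\vec X_H}|_\Gamma$. Applied to $F_1=H, F_2,\ldots,F_n$, this produces $n$ fiberwise-polynomial first integrals $\tilde F_1,\ldots,\tilde F_n$ of $({\rm T}{\bf M}|_\Gamma,\nabla_{\vec X_H}|_\Gamma)$.

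Next I would transport the involution relations. The variational equation along $\Gamma$ is itself a linear Hamiltonian system with symplectic form induced by $\Omega$ on the fibers of ${\rm T}{\bf M}|_\Gamma$, and this induced bracket is compatible with the one on ${\bf M}$ in the following sense: Taylor expansion of the identity $\{F_i,F_j\}\equiv 0$ along the conormal to $\Gamma$ gives $\{\tilde F_i,\tilde F_j\}=0$ fiberwise. Similarly, functional independence of $F_1,\ldots,F_n$ on a neighborhood of $\Gamma$ passes to generic functional independence of $\tilde F_1,\ldots,\tilde F_n$ on ${\rm T}{\bf M}|_\Gamma$. To conclude, I invoke the Galois correspondence together with Remark \ref{galois}: ${\rm Gal}({\rm T}{\bf M}|_\Gamma,\nabla_{\vec X_H}|_\Gamma) \subset {\rm Sp}(2n,\mathbb{C})$ fixes each $\tilde F_i$ pointwise, so its Lie algebra is contained in the centralizer in $\mathfrak{sp}(2n,\mathbb{C})$ of an $n$-dimensional commutative subalgebra that generically spans a Cartan subalgebra. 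This centralizer is itself abelian, so the connected component of the identity of the Galois group is abelian, which is the definition of virtual abelianness.

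The main obstacle I expect is the first step when $F_i$ is critical along $\Gamma$. If $F_i - F_i|_\Gamma$ vanishes to high order along $\Gamma$, the naive candidate $dF_i|_\Gamma$ is zero and one must pass to a higher jet; one must then verify that $n$ such jets remain functionally and Poisson-independent on the fibers. The clean way to handle this, and the technical heart of Morales--Ramis, is to choose a local transverse slice to $\Gamma$ on which $F_2,\ldots,F_n$ can be simultaneously brought to a normal form; this is precisely where the hypothesis of independence on a \emph{neighborhood} of $\Gamma$ (rather than on $\Gamma$ itself) is used in an essential way.
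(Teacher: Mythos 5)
The paper does not prove this statement: Theorem \ref{MoralesRamis} is imported verbatim from Morales-Ruiz and Ramis \cite{MoralesRamis2001a} (see also \cite{Morales1999}) and used as a black box, so there is no in-paper proof to compare yours against. What you have written is a sketch of the published proof, and the overall strategy is the right one: pass from meromorphic first integrals of $\vec X_H$ near $\Gamma$ to fiberwise-polynomial first integrals of the variational connection, transport the involution relations, and conclude with a Ziglin-type lemma on algebraic subgroups of ${\rm Sp}(2n,\mathbb{C})$ admitting $n$ independent rational invariants in involution.

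Two steps of your sketch are not yet proofs. First, the production of $n$ \emph{functionally independent} involutive jets $\tilde F_1,\dots,\tilde F_n$ is the technical heart of the theorem, and you only gesture at it. Taking the leading transverse jet of each $F_i$ separately does not in general yield independent functions on the fibers: the leading jets can coincide or become algebraically dependent even when the $F_i$ are independent on a neighborhood of $\Gamma$. The published argument resolves this with Ziglin's lemma, replacing the $F_i$ by suitable polynomial combinations so that the leading jets become independent; this, rather than a normal form on a transverse slice, is where the hypothesis ``independent in a neighborhood of $\Gamma$ but not necessarily on $\Gamma$'' does its work. You correctly identify the obstacle but do not resolve it, so this is a genuine gap. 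Second, your closing Lie-algebra step is misstated: the $\tilde F_i$ are homogeneous polynomials of possibly high degree on the fibers, so their Hamiltonian vector fields are not linear and there is no Cartan subalgebra of $\mathfrak{sp}(2n,\mathbb{C})$ whose centralizer is relevant. The correct mechanism is Poisson-theoretic: an element $A$ of the Lie algebra of the Galois group corresponds to the quadratic Hamiltonian $q_A(v)=\tfrac12\Omega(Av,v)$; invariance of the $\tilde F_i$ gives $\{q_A,\tilde F_i\}=0$; and any function Poisson-commuting with $n$ independent functions in involution is, at generic points, constant on the Lagrangian leaves they define, whence $\{q_A,q_B\}=0$ and $[A,B]=0$ for all $A,B$ in the Lie algebra. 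With these two repairs your outline becomes the standard argument of \cite{MoralesRamis2001a}; as written it does not yet establish the theorem.
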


\subsection{Analysis of normal variational connection}

From now on let $\bf N$ be a symplectic submanifold of
${\mathbf M}$, i.e., the pair $({\mathbf N},\Omega|_{\mathbf N})$ is a symplectic manifold.
Let $2r$ be the dimension of $\mathbf{N}$ and $n = r + m$.

\begin{lemma}
There is a natural decomposition of the bundle ${\rm T}{\mathbf M}|_{\mathbf N}\to{\mathbf N}$ as the
direct sum of the tangent bundle to $\bf N$ and the normal bundle to $\bf N$ in $\bf M$,
$${\rm T}{\mathbf M}|_{\mathbf N} = {\rm T}{\mathbf M} \oplus {\rm Norm}({\mathbf N},{\mathbf M}).$$
\end{lemma}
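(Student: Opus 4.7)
The statement should read $\mathrm{T}\mathbf{M}|_{\mathbf{N}} = \mathrm{T}\mathbf{N}\oplus \mathrm{Norm}(\mathbf{N},\mathbf{M})$; the crucial hypothesis is that $\mathbf{N}$ is \emph{symplectic}. The plan is to exploit this by producing an explicit complement to $\mathrm{T}\mathbf{N}$ inside $\mathrm{T}\mathbf{M}|_{\mathbf{N}}$ using the symplectic form, and then identifying that complement with the quotient bundle $\mathrm{Norm}(\mathbf{N},\mathbf{M})$ defined by the short exact sequence from Section~\ref{sec1}.

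First I would, for each $p\in\mathbf{N}$, define the symplectic orthogonal
\[
(\mathrm{T}_p\mathbf{N})^{\Omega} \;=\; \{\,v\in\mathrm{T}_p\mathbf{M}\;:\;\Omega_p(v,w)=0 \text{ for all } w\in\mathrm{T}_p\mathbf{N}\,\}.
\]
Since $\Omega$ is non-degenerate on $\mathrm{T}_p\mathbf{M}$, the map $v\mapsto \Omega_p(v,\cdot)|_{\mathrm{T}_p\mathbf{N}}$ is a surjection $\mathrm{T}_p\mathbf{M}\twoheadrightarrow (\mathrm{T}_p\mathbf{N})^{*}$ with kernel $(\mathrm{T}_p\mathbf{N})^{\Omega}$, so by rank-nullity
\[
\dim(\mathrm{T}_p\mathbf{N})^{\Omega} \;=\; \dim\mathrm{T}_p\mathbf{M} - \dim\mathrm{T}_p\mathbf{N} \;=\; 2n-2r \;=\; 2m.
\]

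The key step uses the hypothesis that $\mathbf{N}$ is a symplectic submanifold: by definition $\Omega|_{\mathbf{N}}$ is non-degenerate, which is precisely the statement that $\mathrm{T}_p\mathbf{N}\cap(\mathrm{T}_p\mathbf{N})^{\Omega}=0$ at every point $p\in\mathbf{N}$. Combined with the dimension count, this yields the pointwise direct sum decomposition
\[
\mathrm{T}_p\mathbf{M} \;=\; \mathrm{T}_p\mathbf{N}\,\oplus\,(\mathrm{T}_p\mathbf{N})^{\Omega}.
\]
Because $\Omega$ is holomorphic (in particular, smooth) and $\mathrm{T}\mathbf{N}\subset\mathrm{T}\mathbf{M}|_{\mathbf{N}}$ is a holomorphic subbundle, the orthogonal family $\{(\mathrm{T}_p\mathbf{N})^{\Omega}\}_{p\in\mathbf{N}}$ assembles into a holomorphic subbundle $\mathrm{T}\mathbf{N}^{\Omega}\subset \mathrm{T}\mathbf{M}|_{\mathbf{N}}$, giving the bundle splitting.

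Finally, to match the statement in the lemma I would compose with the quotient map from the exact sequence
\[
0\;\to\;\mathrm{T}\mathbf{N}\;\to\;\mathrm{T}\mathbf{M}|_{\mathbf{N}}\;\to\;\mathrm{Norm}(\mathbf{N},\mathbf{M})\;\to\;0
\]
introduced earlier: the restriction of the projection to $\mathrm{T}\mathbf{N}^{\Omega}$ is injective (by $\mathrm{T}\mathbf{N}\cap\mathrm{T}\mathbf{N}^{\Omega}=0$) and a dimension check shows it is an isomorphism of bundles $\mathrm{T}\mathbf{N}^{\Omega}\xrightarrow{\sim}\mathrm{Norm}(\mathbf{N},\mathbf{M})$. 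Naturality is automatic since the splitting is defined intrinsically from $\Omega$. I do not expect any serious obstacle here; the only subtlety is to emphasize that the symplectic hypothesis on $\mathbf{N}$ is not used to define $\mathrm{T}\mathbf{N}^{\Omega}$ but to ensure it is actually a complement, and to verify that the construction is compatible with the meromorphic triviality assumed at the start of Section~\ref{sec1}, which follows from the holomorphic dependence of $\Omega$ on the base point.
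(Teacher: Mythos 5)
Your proof is correct and follows essentially the same route as the paper: both take the symplectic orthogonal complement ${\rm T}{\mathbf N}^{\Omega}$, use the non-degeneracy of $\Omega|_{\mathbf N}$ to see it is a supplement to ${\rm T}{\mathbf N}$, and identify it with ${\rm Norm}({\mathbf N},{\mathbf M})$ via the exact sequence; you simply spell out the dimension count and the injectivity of the quotient map that the paper leaves implicit. Your correction of the typo in the statement (the first summand should be ${\rm T}{\mathbf N}$, not ${\rm T}{\mathbf M}$) is also right.
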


\begin{proof}
Let us consider ${\rm T}{\bf N}^\bot \subset {\rm T}{\mathbf M}$ the bundle of vectors
orthogonal to ${\rm T}{\mathbf N}$ with respect to the symplectic form $\Omega$. By
hypothesis $\Omega|_N$ is non-degenerated and therefore ${\rm T}{\mathbf N}^\bot$ is
a supplementary bundle for ${\rm T}{\bf N}$. The exact sequence \eqref{exact}
identifies ${\rm T}{\mathbf N}^\bot$ with ${\mathrm Norm}({\bf N},{\mathbf M})$ and we get the
result.
\end{proof}

\begin{lemma}\label{directsum}
The variational connection $\nabla_{\vec X_H}|_{\mathbf N}$ in ${\rm T}{\mathbf M}|_{\mathbf N}$
splits as direct sum of the variational connection of  $\nabla_{\vec X_H|_{\mathbf N}}$
in ${\rm T}{\bf N}$ and the normal variational connection $\bar\nabla$ in
${\rm Norm}({\mathbf N},{\mathbf M})$,
$$\nabla_{\vec X_H}|_{\mathbf N} = \nabla_{\vec X_H|_{\mathbf N}} \oplus \bar\nabla.$$
\end{lemma}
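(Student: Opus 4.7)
The plan is to use the previous lemma to decompose ${\rm T}{\mathbf M}|_{\mathbf N} = {\rm T}{\mathbf N} \oplus {\rm T}{\mathbf N}^{\bot}$, where ${\rm T}{\mathbf N}^{\bot}$ is the symplectic complement identified with ${\rm Norm}({\mathbf N},{\mathbf M})$, and then show that the connection $\nabla_{\vec X_H}|_{\mathbf N} = [\vec X_H,\cdot]$ preserves each summand. Once both summands are preserved, the exact sequence \eqref{exact} (specialized to the symplectic setting) splits as a sequence of connections, which is exactly the claim.

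The first summand is easy: since $\bf N$ is $\vec X_H$-invariant, the Lie bracket $[\vec X_H,\vec Z]$ of $\vec X_H$ with any section $\vec Z$ of ${\rm T}{\mathbf N}$ is again a section of ${\rm T}{\mathbf N}$, and the resulting connection is by construction $\nabla_{\vec X_H|_{\mathbf N}}$. The substantive step is to show the symplectic complement ${\rm T}{\mathbf N}^{\bot}$ is preserved. Here I would invoke the Hamiltonian invariance $L_{\vec X_H}\Omega = 0$ together with the standard identity for the Lie derivative of a 2-form on two vector fields,
\begin{equation*}
\vec X_H\bigl(\Omega(\vec Y,\vec Z)\bigr) = \Omega([\vec X_H,\vec Y],\vec Z) + \Omega(\vec Y,[\vec X_H,\vec Z]).
\end{equation*}
Taking $\vec Y$ a section of ${\rm T}{\mathbf N}^{\bot}$ and $\vec Z$ an arbitrary section of ${\rm T}{\mathbf N}$, the left-hand side vanishes on $\bf N$ because $\Omega(\vec Y,\vec Z)|_{\mathbf N}=0$ and $\vec X_H$ is tangent to $\bf N$; the last term vanishes on $\bf N$ because $[\vec X_H,\vec Z]|_{\mathbf N}$ is tangent to $\bf N$ by the first summand argument. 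Hence $\Omega([\vec X_H,\vec Y],\vec Z)|_{\mathbf N}=0$ for every $\vec Z$ tangent to $\bf N$, so $[\vec X_H,\vec Y]|_{\mathbf N}$ lies in ${\rm T}{\mathbf N}^{\bot}$.

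This shows $\nabla_{\vec X_H}|_{\mathbf N}$ restricts to a linear connection on ${\rm T}{\mathbf N}^{\bot}$ in the direction of $\vec X_H|_{\mathbf N}$. Transporting this via the identification ${\rm T}{\mathbf N}^{\bot} \cong {\rm Norm}({\mathbf N},{\mathbf M})$ from the previous lemma, I would verify that the transported connection agrees with $\bar\nabla$: both are characterized as the quotient of $\nabla_{\vec X_H}|_{\mathbf N}$ by the subconnection on ${\rm T}{\mathbf N}$, and the symplectic projection realizes this quotient. Combining the two preserved summands yields the desired direct sum decomposition of connections.

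The only step that requires mild care is the last one: checking that the splitting obtained through the symplectic complement coincides with the one coming from the exact sequence defining ${\rm Norm}({\mathbf N},{\mathbf M})$. This is a routine diagram chase once one knows that ${\rm T}{\mathbf N}^{\bot}$ is supplementary to ${\rm T}{\mathbf N}$ and that projection onto ${\rm T}{\mathbf N}^{\bot}$ commutes with the action $[\vec X_H,\cdot]$, which is precisely what the Lie-derivative computation above provides.
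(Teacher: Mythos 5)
Your proposal is correct and follows essentially the same route as the paper: both reduce the claim to showing that the symplectic orthogonal bundle ${\rm T}{\mathbf N}^{\bot}$ (identified with ${\rm Norm}({\mathbf N},{\mathbf M})$ by the preceding lemma) is invariant under $[\vec X_H,\cdot\,]$, using $\mathcal L_{\vec X_H}\Omega=0$ and the Lie-derivative identity for $\Omega(\vec Y,\vec Z)$. Your write-up is in fact slightly more careful than the paper's, which drops the term $\vec X_H\bigl(\Omega(\vec Y,\vec Z)\bigr)$ without comment (and contains the typo $\mathcal L_{\vec X}\Omega=\Omega$); you correctly note that this term vanishes on ${\mathbf N}$.
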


\begin{proof}
It suffices to proof  that ${\rm T}{\bf N}^\bot$ is an invariant bundle
for $\nabla_{\vec X_H}|_{\bf N}$. Let us take $\vec Y$ meromorphic section
of ${\rm T}{\bf N}^\bot$ and $\vec Z$ meromorphic vector field in $\bf M$.
Let us note that $\mathcal L_{\vec X}\Omega = \Omega$ for any Hamiltonian vector
field, and that $[\vec X, \vec Z]$ is defined as vector
field tangent to $\bf N$. Therefore,
$$\Omega(\nabla_{\vec X}|_{\bf N}\vec Y,\vec Z) = \Omega([\vec X,\vec Y],\vec Z)
= (\mathcal L_{\vec X}\Omega)(\vec Y, \vec Z) - \Omega(\vec Y, [\vec X,\vec Z])
= 0$$
and $\nabla_{\vec X}|_{\mathbf N}\vec Y$ is orthogonal to ${\rm T}{\bf N}$ with respect
to $\Omega$.
\end{proof}

\begin{theorem}\label{main}
Let $H$ be a Hamiltonian system in ${\mathbf M}$. Assume that ${\mathbf N}$ is invariant by the flow
of $\vec X_H$. Assume that $\vec X_H|_N$ is completely integrable, as Hamiltonian
system in ${\mathbf N}$, by meromorphic functions. Let $\Gamma$ be an invariant curve of
$\vec X_H$ in ${\mathbf N}$. Then the differential Galois group of the variational equation
to the flow of $\vec X_H$ along $\Gamma$, ${\rm Gal}(\nabla_{\vec X}|_\Gamma)$ is virtually
abelian if and only if the differential Galois group of the variational equation to the
flow of $\vec X_H$ normal to ${\mathbf N}$ along $\Gamma$, ${\rm Gal}(\bar\nabla|_\Gamma)$,
is virtually abelian.
\end{theorem}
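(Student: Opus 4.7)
The plan is to derive the theorem as a formal consequence of Lemma~\ref{directsum}, the Morales--Ramis Theorem~\ref{MoralesRamis}, and the functorial properties of the differential Galois group listed in Remark~\ref{galois}. By Lemma~\ref{directsum}, restricting the splitting further to $\Gamma\subset{\mathbf N}$ (which is legitimate because $\Gamma$ is $\vec X_H$-invariant, so all the connections specialise) yields
$$({\rm T}{\mathbf M}|_\Gamma,\nabla_{\vec X_H}|_\Gamma) \;=\; ({\rm T}{\mathbf N}|_\Gamma,\nabla_{\vec X_H|_{\mathbf N}}|_\Gamma)\;\oplus\;({\rm Norm}({\mathbf N},{\mathbf M})|_\Gamma,\bar\nabla|_\Gamma).$$
Property (2) of Remark~\ref{galois} then produces an injective morphism of algebraic groups
$${\rm Gal}(\nabla_{\vec X_H}|_\Gamma) \;\hookrightarrow\; {\rm Gal}(\nabla_{\vec X_H|_{\mathbf N}}|_\Gamma)\;\times\;{\rm Gal}(\bar\nabla|_\Gamma),$$
and composing the splitting with each canonical projection gives, via property (1), two surjective morphisms from ${\rm Gal}(\nabla_{\vec X_H}|_\Gamma)$ onto each factor.

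For the implication $(\Rightarrow)$, assume ${\rm Gal}(\nabla_{\vec X_H}|_\Gamma)$ is virtually abelian. Since the projection onto the normal factor is surjective, ${\rm Gal}(\bar\nabla|_\Gamma)$ is a homomorphic image of a virtually abelian group. A homomorphic image of a virtually abelian algebraic group is again virtually abelian: the image of the connected identity component is connected, lies inside the identity component of the image, and is abelian as a quotient of an abelian group. Hence ${\rm Gal}(\bar\nabla|_\Gamma)$ is virtually abelian.

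For the converse $(\Leftarrow)$, the extra ingredient is Theorem~\ref{MoralesRamis} applied to the restricted Hamiltonian system $\vec X_H|_{\mathbf N}$ on the symplectic submanifold $({\mathbf N},\Omega|_{\mathbf N})$. By hypothesis this restricted system is completely integrable by meromorphic functions, and $\Gamma$ is an invariant curve for it, so Morales--Ramis yields that ${\rm Gal}(\nabla_{\vec X_H|_{\mathbf N}}|_\Gamma)$ is virtually abelian. Together with the hypothesis on ${\rm Gal}(\bar\nabla|_\Gamma)$, the product
$${\rm Gal}(\nabla_{\vec X_H|_{\mathbf N}}|_\Gamma)\;\times\;{\rm Gal}(\bar\nabla|_\Gamma)$$
is virtually abelian (its identity component is the product of the two identity components, which is abelian). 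The injection above then realises ${\rm Gal}(\nabla_{\vec X_H}|_\Gamma)$ as a Zariski closed subgroup of this product, and a closed subgroup of a virtually abelian algebraic group is virtually abelian: its identity component equals its intersection with the ambient identity component, which is contained in an abelian group.

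There is no genuine computation to perform; the only delicate points are the closure of the class of virtually abelian linear algebraic groups under subgroups, quotients and finite products, and the observation that the hypothesis of integrability is used exactly once, in the converse direction, to feed the restricted connection into Morales--Ramis. The main conceptual step, rather than an obstacle, is recognising that the direct sum structure provided by Lemma~\ref{directsum} decouples the ``tangential'' Galois information (controlled by the integrability of $\vec X_H|_{\mathbf N}$) from the ``normal'' one, reducing non-integrability of $H$ to an analysis of $\bar\nabla|_\Gamma$ alone.
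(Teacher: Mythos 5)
Your proposal is correct and follows essentially the same route as the paper: the splitting of Lemma~\ref{directsum} plus the functorial properties of Remark~\ref{galois} give the surjection onto ${\rm Gal}(\bar\nabla|_\Gamma)$ for the forward implication and the injection into the product for the converse, with Theorem~\ref{MoralesRamis} applied to $\vec X_H|_{\mathbf N}$ supplying virtual abelianity of the tangential factor. If anything, you are more careful than the paper in justifying that virtually abelian algebraic groups are closed under quotients, closed subgroups and finite products, which the paper leaves implicit.
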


\begin{proof}
First, let us assume that ${\rm Gal}(\nabla_{\vec X}|_{\Gamma})$ is virtually
abelian. We consider the exact sequence \eqref{exact}. By Remark \ref{galois}  there
is exhaustive group morphism,
$$\pi_2\colon{\rm Gal}(\nabla_{\vec X}|_{\Gamma})\to {\rm Gal}(\bar\nabla|_{\Gamma}).$$
Therefore, if ${\rm Gal}(\nabla_{\vec X}|_{\Gamma})$ is virtually abelian then its quotient
${\rm Gal}(\bar\nabla|_{\Gamma})$ is also abelian.
Second, let us assume that ${\rm Gal}(\bar\nabla|_{\Gamma})$ is virtually
abelian. From Lemma \ref{directsum} we have
$\nabla_{\vec X}|_{\Gamma} = \nabla_{\vec X|_{\bf N}}|_{\Gamma}\oplus \bar\nabla|_{\Gamma}$,
and by Remark \ref{galois} (2) there is an injective group morphism,
$$(\pi_1\times\pi_2)\colon  {\rm Gal}(\nabla_{\vec X}|_{\Gamma})\to
{\rm Gal}(\nabla_{\vec X|_{\bf N}}|_{\Gamma}) \times
{\rm Gal}(\bar\nabla|_{\Gamma}).$$

Assume that ${\rm Gal}(\bar\nabla|_{\Gamma})$ is virtually abelian. From Theorem \ref{MoralesRamis} we
have that \\ ${\rm Gal}(\nabla_{\vec X|_{\bf N}}|_{\Gamma})$ is virtually abelian.
It follows that ${\rm Gal}(\nabla_{\vec X})$ is virtually abelian.
\end{proof}

The following corollary directly follows from Theorem \ref{MoralesRamis}.
Therefore, we can ask what is the use of Theorem \ref{main}. There
are examples, e.g. \cite{MoralesSimoSimon}, of application of the Morales-Ramis theorem
in which the non-integrability is not seen in the normal variational equation
but in the total variational equation. Theorem \ref{main} describes
a theoretical situation in which the non-integrability,
if captured by the first order variational equation then it is captured with
the normal variational equation.

\begin{corollary}
Let $H$ be a Hamiltonian system in ${\mathbf M}$. Assume that ${\mathbf N}$ is invariant by the flow
of $\vec X_H$. Assume that $\vec X_H|_N$ is completely integrable, as Hamiltonian
system in ${\mathbf N}$, by meromorphic functions. Then if $\vec X_H$ is completely integrable
in ${\mathbf M}$ by meromorphic functions then the differential Galois group of the variational
equation to the flow of $\vec X_H$ normal to ${\mathbf N}$ along $\Gamma$, ${\rm Gal}(\bar\nabla|_\Gamma)$,
is virtually abelian.
\end{corollary}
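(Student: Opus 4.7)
The plan is to obtain the conclusion as a direct composition of two results already at our disposal: Theorem \ref{MoralesRamis} of Morales--Ramis, which constrains the Galois group of the \emph{total} variational equation when the full Hamiltonian is integrable, and Theorem \ref{main}, which, under the hypothesis that $\vec X_H|_{\mathbf N}$ is integrable, gives an \emph{equivalence} between virtual abelianity of the total and normal variational Galois groups along $\Gamma$. So the corollary amounts to running Morales--Ramis on $\mathbf M$ and then transporting the resulting information into the normal bundle using Theorem \ref{main}.

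Concretely, I would begin by checking that the hypotheses of Theorem \ref{MoralesRamis} apply to $H$ on $\mathbf M$. We are told $\vec X_H$ is completely integrable in $\mathbf M$ by meromorphic first integrals, and $\Gamma\subset{\mathbf N}$ is an invariant curve of $\vec X_H$ (viewed in ${\mathbf M}$, since ${\mathbf N}$ is $\vec X_H$-invariant and hence $\vec X_H|_\Gamma = \vec X_H|_{\mathbf N}|_\Gamma$). Theorem \ref{MoralesRamis} then yields that ${\rm Gal}({\rm T}{\mathbf M}|_\Gamma,\nabla_{\vec X_H}|_\Gamma)$ is virtually abelian.

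Next I would invoke Theorem \ref{main}. Its standing hypotheses are precisely that $\mathbf N$ is $\vec X_H$-invariant, $\vec X_H|_{\mathbf N}$ is completely integrable by meromorphic functions on $\mathbf N$, and $\Gamma$ is an invariant curve of $\vec X_H$ in $\mathbf N$; all of these are assumed in the corollary. The theorem asserts the biconditional that ${\rm Gal}(\nabla_{\vec X_H}|_\Gamma)$ is virtually abelian if and only if ${\rm Gal}(\bar\nabla|_\Gamma)$ is. Applying the forward direction to the conclusion of the previous step produces the virtual abelianity of ${\rm Gal}(\bar\nabla|_\Gamma)$, which is the claim.

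There is essentially no obstacle here beyond bookkeeping; the statement is, by design, a packaging of Theorem \ref{MoralesRamis} and Theorem \ref{main}. The only small point one should be careful about is verifying that the integrability hypothesis required by Theorem \ref{MoralesRamis} on $\mathbf M$ is not confused with the hypothesis of Theorem \ref{main} on $\mathbf N$: both are explicitly assumed in the corollary, and one uses the first to feed Morales--Ramis on the total variational equation while using the second to activate the equivalence of Theorem \ref{main}. The value added by the corollary, as the surrounding discussion emphasises, is that whenever the total variational equation detects non-integrability (in the virtually-abelian sense) in the presence of an integrable invariant subsystem, the normal variational equation alone is already sufficient to detect it.
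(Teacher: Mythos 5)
Your proposal is correct and follows essentially the same route as the paper: the paper states that the corollary ``directly follows from Theorem \ref{MoralesRamis}'', i.e.\ complete integrability on ${\mathbf M}$ gives virtual abelianity of ${\rm Gal}(\nabla_{\vec X_H}|_\Gamma)$, which is then pushed to ${\rm Gal}(\bar\nabla|_\Gamma)$ via the surjection onto the normal variational Galois group --- exactly the forward implication of Theorem \ref{main} that you invoke. One minor observation: that forward implication uses only the exhaustive morphism of Remark \ref{galois}(1) and does not actually need the integrability of $\vec X_H|_{\mathbf N}$, so your argument (like the paper's) is if anything slightly over-hypothesised, but this is harmless.
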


\section{The Wilbeforce-spring pendulum}\label{wilforce}
The classical Wilbeforce spring--pendulum consists of a solid cylinder
attached to a spring. The cylinder is free to perform oscillations
along  the vertical. At the same time when the spring is elongated
it produces a torque proportional to the angle of torsion of the
cylinder $\phi$ (see Figure 1). We add an extra degree of freedom by allowing the spring to swing on
a  fixed plane in space containing the vertical motions. Let $r$, $\theta$ denote polar coordinates, where $r$ is
 the distance from the fixed point  of the spring to center o mass of the cylinder and $\theta$ the angular deviation from the vertical axis.
The Lagrangian is
\begin{equation}\label{lagra}
\frac{1}{2} m
   \left(\dot{r}^2+r^2\dot{\theta}^2 \right)+
   \frac{J \dot{\phi}^2}{2}-\frac{1}{2} k
   (r-r_0)^2-\frac{\lambda  \phi ^2}{2}-\frac{1}{2}
   (r-\ell) \epsilon  \phi +g m r \cos \theta
\end{equation}
where $k$ is the constant of the spring, $r_0$ is the unstretched length of the spring, $\lambda$ gives the constant of proportionality of the angular restoring and $\epsilon$ is the coupling among the vertical
and torsional modes. The mass of the cylinder is $m$ and $J$ its
moment of inertia.
\begin{figure}[htb!]
\centering
\includegraphics[height =2.5in]{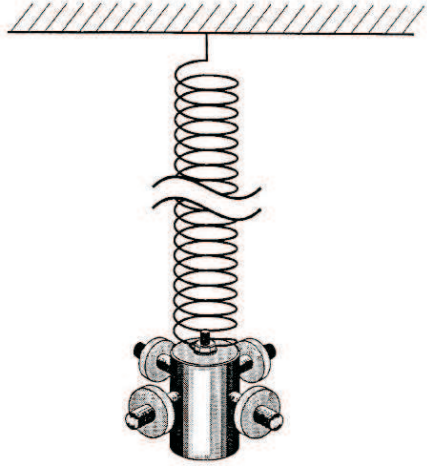}%[width=8cm, height =7cm]{shape1.eps}
\caption{The Wilberforce spring-pendulum. Taken from reference \cite{berg}.}
\label{fig1}
\end{figure}

\begin{remark}
Observe that for $\epsilon=0$ we recover the spring--pendulum system whose aspects of non--integrability have been studied among others by \cite{chdr}, \cite{mpw}, \cite{Morales1999} and \cite{regina}.
\end{remark}

\begin{remark} The invariant subsystem $\theta=0$, $\dot{\theta}=0$ corresponds to the classical Wilberforce pendulum which is a quadratic Lagrangian thus a \emph{linear system}.
\end{remark}

\subsection{Adimensionalization}
It will be convenient to reduce the seven parameters $m, J, k, r_0, \epsilon, g$
to four by the introduction to variables a follows:
Let $\ell=r_0+mg/k$, $\omega_p^2=g/\ell$, $\omega_s^2=k/m$. Observe that
$$
1=\frac{r_0}{\ell}+\frac{mg}{k\ell}=\frac{r_0}{\ell}+\left(\frac{\omega_p}{\omega_s}\right)^2
= \frac{r_0}{\ell}+f
$$
where the parameter
$$
f=\left(\frac{\omega_p}{\omega_s}\right)^2
$$ is the ratio of frequencies of the linear pendulum and the spring.
Then if $r=\ell\rho$,
$$
r-r_0=\ell(\rho-1+f).
$$
Also we introduce a new time variable $\tau$ via  $\frac{d\tau}{dt}=\omega_s$,
with this substitution the Lagrangian (\ref{lagra}) becomes
$$
L=\frac{1}{2}k\ell^2(\rho'^2+\rho^2\theta'^2)+\frac{Jk}{2m}\phi'^2
-\frac{1}{2}k\ell^2(\rho-1+f)^2-\frac{\lambda}{2}\phi^2 -
\frac{1}{2}\epsilon\ell (\rho - 1 )\phi
+gm\ell \rho\cos\theta
$$
a further factorization yields
$$\displaylines{ L=k\ell^2\left[
\frac{1}{2}(\rho'^2+\rho^2\theta'^2)+\frac{1}{2}\frac{J}{m\ell^2}\phi'^2
-\frac{1}{2}(\rho-1+f)^2-\frac{1}{2}\frac{\lambda}{k\ell^2}\phi^2 \right.\cr \quad \qquad
- \frac{1}{2}\frac{\epsilon}{k\ell} (\rho - 1)\phi
\left.+\frac{gm}{k\ell} \rho\cos\theta\right]\cr}
$$
where prime ($'$) indicates differentiation with respect $\tau .$

One checks easily that the constant factor can be dismissed leading to the adimensional Lagrangian,
$$
 L=
\frac{1}{2}(\rho'^2+\rho^2\theta'^2)+\frac{1}{2}\frac{J}{m\ell^2}\phi'^2
-\frac{1}{2}(\rho-1+f)^2-\frac{1}{2}\frac{\lambda}{k\ell^2}\phi^2 -
\frac{1}{2}\frac{\epsilon}{k\ell} (\rho-1)\phi
+\frac{gm}{k\ell} \rho\cos\theta.
$$
Let us introduce the following (dimensionless) parameters
$$
a=\frac{J}{m\ell^2},\quad b=\frac{\lambda}{k\ell^2},\quad c=\frac{\epsilon}{k\ell}
$$
then the Lagrangian finally becomes
\begin{equation}\label{adim}
     L=
\frac{1}{2}(\rho'^2+\rho^2\theta'^2)+\frac{1}{2}a\phi'^2
-\frac{1}{2}(\rho-1+f)^2-\frac{1}{2}b\phi^2 -
\frac{1}{2}c (\rho-1)\phi
+f \rho\cos\theta.
\end{equation}

The Hamiltonian corresponding to this coordinates assume the form
\begin{equation}
     H=
\frac{1}{2}\left(P_\rho^2+\frac{P_\theta^2}{\rho^2}\right)+\frac{1}{2a}P_\phi^2
+\frac{1}{2}(\rho-1+f)^2+\frac{1}{2}b\phi^2 +
\frac{1}{2}c (\rho-1)\phi
-f \rho\cos\theta.
\end{equation}
In fact by scaling the angle $\phi$ and its conjugate momenta we can suppose in
what follows that $a=1$ resulting with three parameters: $b,c,f$
\begin{equation}\label{hamiltonian}
     H=
\frac{1}{2}\left(P_\rho^2+\frac{P_\theta^2}{\rho^2}+P_\phi^2\right)+
\frac{1}{2}(\rho-1+f)^2+\frac{1}{2}b\phi^2 +
\frac{1}{2}c (\rho-1)\phi
-f \rho\cos\theta .
\end{equation}

\subsection{The Wilberforce linear subsystem}
The equations of motion are given by
\begin{equation}\label{eq-adim}
\begin{array}{lll}
% \nonumber to remove numbering (before each equation)
  \rho' &=& P_\rho, \\ && \\
  \theta' &=& \displaystyle{\frac{P_\theta}{\rho^2}}, \\ && \\
  \phi' &=& P_\phi, \\ && \\
  P_\rho' &=& \displaystyle{\frac{P_\theta}{\rho^3}-(\rho-1+f)-\frac{c}{2}\phi+f\cos\theta}, \\ && \\
  P_\theta' &=& -f\rho\sin\theta, \\ && \\
  P_\phi' &=& \displaystyle{-b\phi-\frac{c}{2}(\rho-1)}.
\end{array}
\end{equation}

From these equations, one checks easily that $\theta=P_\theta=0$ is an invariant subsystem, as asserted previously. The systems reduces in this case to the linear system
\begin{equation}\label{eq-adim2}
\begin{array}{lll}
  \rho' &=& P_\rho, \\ && \\
  \phi' &=& P_\phi, \\ && \\
  P_\rho' &=& \displaystyle{-\rho+1-\frac{c}{2}\phi}, \\ && \\
  P_\phi' &=& \displaystyle{-b\phi-\frac{c}{2}(\rho-1}).
\end{array}
\end{equation}
That can be recast as   two coupled oscillators
\begin{align}
\rho'' + \rho =  & 1 - \frac{c}{2}\phi,  \label{sol-var}\\
\phi '' + b\phi = & -\frac{c}{2} \rho +\frac{c}{2}(1-f)\nonumber
\end{align}
with  Hamiltonian (set $P_{\theta}=\theta=0$ in (\ref{hamiltonian}))
\begin{equation}\label{hamiltonian2}
H_{0,0}=
\frac{1}{2}\left(P_\rho^2+P_\phi^2\right)+
\frac{1}{2}(\rho-1+f)^2+\frac{1}{2}b\phi^2 +
\frac{1}{2}c (\rho-1)\phi -f \rho.
\end{equation}
Let us exhibit the reduced system (\ref{sol-var}) as an integrable Hamiltonian system in a convenient normal form. For this purpose let
 $z=\rho-1+f$, so  (\ref{hamiltonian2}) becomes
\begin{equation}\label{hamiltonian3}
H_{0,0} (z,\phi) = \frac{1}{2}(P_z^2 + P_\phi^2) + \frac{1}{2}z^2 + \frac{b}{2}\phi^2+
\frac{1}{2}c (z-f)\phi  -fz-f(1-f).
\end{equation}
where of course the constant term can be neglected.
The following proposition makes explicit the integrals of motion in this case.

\begin{proposition}
There exists a symplectic change of coordinates that transforms the Hamiltonian
(\ref{hamiltonian3}) to the form
\begin{equation}\label{normal-form}
    H_{0,0}=P_{x_1}^2+P_{x_2}^2+\omega_1^2x_1^2+
\omega_2^2x_2^2
\end{equation}
\end{proposition}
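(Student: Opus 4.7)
The plan is to proceed in three symplectic moves: a translation to kill the linear terms, an orthogonal rotation to uncouple the normal modes, and a trivial rescaling to match the stated normalization. Throughout I would work with the original symplectic form $dz\wedge dP_z + d\phi\wedge dP_\phi$ and only use transformations that manifestly preserve it.

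First I would locate the equilibrium of the quadratic-plus-linear Hamiltonian $H_{0,0}(z,\phi)$ by solving $\partial H_{0,0}/\partial z = \partial H_{0,0}/\partial \phi = 0$. A short calculation shows that, provided $b\ne c^2/4$, the equilibrium lies at $\phi=0,$ $z=f$ (and of course $P_z=P_\phi=0$). Translating the configuration variables to this point is a symplectic map that kills the terms $-fz$ and $-\tfrac{cf}{2}\phi$, leaving an irrelevant additive constant that I would drop. After this step the Hamiltonian is purely quadratic,
\[
\tilde H_{0,0} \;=\; \tfrac12\bigl(P_z^2+P_\phi^2\bigr) \;+\; \tfrac12\,(z,\phi)\,A\,(z,\phi)^{t},\qquad A=\begin{pmatrix} 1 & c/2 \\ c/2 & b\end{pmatrix}.
\]

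Second, $A$ is real symmetric, so an orthogonal matrix $O\in\mathrm{O}(2)$ diagonalizes it with eigenvalues
\[
\omega_{1,2}^{2}\;=\;\tfrac12\Bigl[(1+b)\pm\sqrt{(1-b)^{2}+c^{2}}\,\Bigr],
\]
both of which are positive under the standing ellipticity assumption $b>c^2/4$ (needed so that the linear subsystem is oscillatory). The crucial point is that applying $O$ simultaneously to configurations and to momenta, $(z,\phi)^t=O(x_1,x_2)^t$ and $(P_z,P_\phi)^t=O(P_{x_1},P_{x_2})^t$, gives a symplectic transformation, because $O^{t}=O^{-1}$ is exactly the cotangent lift of $O$. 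The kinetic part $\tfrac12(P_z^2+P_\phi^2)$ is invariant under this orthogonal change (Euclidean norm), while the potential part becomes $\tfrac12(\omega_1^{2}x_1^{2}+\omega_2^{2}x_2^{2})$, so the Hamiltonian is now a sum of two uncoupled harmonic oscillators.

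Finally, a diagonal rescaling $(x_i,P_{x_i})\mapsto(x_i/\sqrt{2},\sqrt{2}\,P_{x_i})$, which is manifestly symplectic, clears the factor $\tfrac12$ and produces exactly the form (\ref{normal-form}) (up to the harmless relabeling $\omega_i \leftarrow \omega_i/\sqrt{2}$). The only conceptual obstacle is step two, namely observing that simultaneous orthogonal action on positions and momenta is symplectic: once that is in hand, the rest is linear algebra. A minor caveat that should be flagged is the non-degeneracy assumption $b\ne c^2/4$ and the positivity $b>c^2/4$, which guarantee both that the equilibrium is unique and that the frequencies $\omega_1,\omega_2$ are real; outside this regime one obtains a different, non-elliptic normal form that is not the one claimed.
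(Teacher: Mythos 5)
Your argument is correct, and its core step --- diagonalizing the Hessian of the potential by an orthogonal matrix and acting with the same matrix on the momenta, which is symplectic because it is the cotangent lift --- is exactly what the paper does; the paper merely writes the rotation explicitly as an angle $\alpha$ fixed by killing the $x_1x_2$ cross term (whence $\tan\alpha=\frac{-1+b\pm\sqrt{(b-1)^2+c^2}}{c}$), while you invoke the spectral theorem for the symmetric matrix of the quadratic part; both routes produce the same $\omega_{1,2}^2$. Where you genuinely go beyond the paper is the preliminary translation to the equilibrium $(z,\phi)=(f,0)$: the Hamiltonian \eqref{hamiltonian3} still carries the linear terms $-fz-\tfrac{cf}{2}\phi$, which no rotation can remove, and the paper's proof silently discards them (only the constant $-f(1-f)$ is explicitly dropped), so your translation --- valid precisely when $b\neq c^2/4$, the same nondegeneracy that resurfaces as the hypothesis $c^2-4b\neq 0$ of Theorem \ref{wilbinteg} --- supplies a step the paper actually needs. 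Two small caveats. First, your closing rescaling chases what is really a typo: the displayed form \eqref{normal-form} is missing the factor $\tfrac12$ with which the paper's own proof ends, and the values \eqref{omega1}--\eqref{omega2} are only consistent with the $\tfrac12$-version; if one does rescale, the symplectic scaling $x_i\mapsto \sqrt{2}\,x_i$, $P_{x_i}\mapsto P_{x_i}/\sqrt{2}$ turns $\tfrac12\left(P_{x_i}^2+\omega_i^2x_i^2\right)$ into $P_{x_i}^2+\tfrac{\omega_i^2}{4}x_i^2$, so the relabeling is $\omega_i\leftarrow\omega_i/2$, not $\omega_i/\sqrt{2}$ as you wrote. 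Second, the positivity $b>c^2/4$ is not needed for the proposition itself: the orthogonal diagonalization is indifferent to the signs of the eigenvalues, and the paper deliberately reuses the same normal form with $\omega_2^2<0$ in the regime $c^2>4b$; only the nondegeneracy $b\neq c^2/4$ is essential.
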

\begin{proof}
Perform the linear symplectic change of variables with parameter~$\alpha$
\begin{eqnarray*}
z &=& x_1\cos\alpha - x_2\sin\alpha, \\
\phi &=& x_1\sin\alpha + x_2\cos\alpha,\\
P_{x_1} &=& P_{z}\cos\alpha + P_{\phi}\sin\alpha, \\
P_{x_2} &=& -P_{z}\sin\alpha + P_{\phi}\cos\alpha.
\end{eqnarray*}
Then naturally $P_{x_1}^2+P_{x_2}^2=P_z^2+P_{\phi}^2$ for any  $\alpha$.
Imposing the condition that the mixed terms $x_1 x_2$ vanish we get the choice
$$
\tan\alpha = \frac{-1+b\pm \sqrt{(b-1)^2+c^2}}{c}.
$$
Then the Hamiltonian becomes
$$
 H_{0,0}=\frac{1}{2}\left(P_{x_1}^2+P_{x_2}^2+\omega_1^2x_1^2+\omega_2^2x_2^2\right)
$$
where
\begin{eqnarray}
\omega_1^2 &= \cos^2\alpha + b\sin^2\alpha + c \cos\alpha\sin\alpha =\frac{1}{2}\left(1+b+\sqrt{(-1+b)^2+c^2}\right)\label{omega1}\\
\omega_2^2 &= \sin^2\alpha + b\cos^2\alpha - c\cos\alpha\sin\alpha =\frac{1}{2}\left(1+b-\sqrt{(-1+b)^2+c^2}\right)\label{omega2}
\end{eqnarray}
\end{proof}

\subsection{The normal variational equations}
We next calculate the full variational equations of (\ref{eq-adim}) along of the solution of (\ref{sol-var}), which  can be written as
\begin{equation}\label{var1}
\begin{array}{lll}
 \delta\rho' &=& \delta P_\rho, \\ && \\
 \delta\theta' &=& \displaystyle{\frac{\delta P_\theta}{\rho^2} -\frac{2  P_\theta\delta\rho}{\rho^3}},\\ && \\
 \delta\phi' &=& \delta P_\phi\\ && \\
 \delta P'_\rho &=& \displaystyle{\frac{\delta P_\theta}{\rho^3}- 3 \frac{P_\theta}{\rho^4} \delta\rho - \delta\rho -\frac{c}{2}\delta\phi - f\delta\theta\sin\theta},\\ && \\
 \delta P'_\theta &=& -f\delta\rho\sin\theta -f\rho \:\delta\theta\cos\theta,\\ && \\
 \delta P'_\phi &=&  - \displaystyle{\frac{c}{2}\delta\rho} -b\delta\phi.
\end{array}
\end{equation}

It follows immediately from equations (\ref{var1}) that the {\em tangencial} equations
along the invariant plane $\theta  = P_\theta = 0$ are
\begin{equation*}
\begin{array}{lll}
 \delta\rho' &=& \delta P_\rho, \\ && \\
 \delta\phi' &=& \delta P_\phi,\\ && \\
 \delta P'_\rho &=& \displaystyle{ -\delta\rho -\frac{c}{2}\delta\phi },\\ && \\
 \delta P'_\phi &=& - \displaystyle{\frac{c}{2}\delta\rho} -b\delta\phi.
\end{array}
\end{equation*}
Then this can be rewritten as
\begin{align*}
\left( \begin{array}{c} \delta \rho' \\ \delta\phi \\ \delta P_\rho' \\ \delta P_\phi'\end{array} \right) &= \left( \begin{array}{llll}
0 & 0 & 1 & 0 \\
0 & 0 & 0 & 1 \\
-1 & -\frac{c}{2} & 0 & 0 \\
-\frac{c}{2} & -b & 0 & 0
        \end{array}\right)
     \left( \begin{array}{c} \delta \rho \\ \delta\phi \\ \delta P_\rho \\ \delta P_\phi\end{array} \right)
\end{align*}
The $2\times 2$ lower matrix has precisely the eigenvalues $-\omega_{1,2}^2$  given in (\ref{omega1}), (\ref{omega2}).
Thus the eigenvalues of the full matrix are $\pm i\omega_1$, $\pm i\omega_2$ whenever $\omega_{1,2}^2\neq 0$. The case  $\omega_{1,2}^2>0$ is obtained whenever $c^2<4b$, and the general solution of this system is
\begin{align*}
\rho (t) &= A\sin \omega_1 t + B\cos \omega_2 t, \\
\phi (t) &= C\sin \omega_1 t + D\cos \omega_2 t
\end{align*}
with arbitrary constants $A$, $B$, $C$ and $D$. In the same way, the case $\omega_{1,2}^2<0$ is obtained whenever $c^2>4b$, and the general solution of this system is
\begin{align*}
\rho (t) &= A_2\sinh \omega_1 t + B_2\cosh \omega_2 t, \\
\phi (t) &= C_2\sinh \omega_1 t + D_2\cosh \omega_2 t
\end{align*}
with arbitrary constants $A_2$, $B_2$, $C_2$ and $D_2$.

In general, for $\omega_{1,2}^2\neq 0$, provided whenever $c^2\neq 4b$, the general solution of such system is
\begin{align*}
\rho (t) &= A_3e^{\omega_1 t} + B_3e^{\omega_2 t}, \\
\phi (t) &= C_3e^{\omega_1 t} + D_3e^{\omega_2 t}
\end{align*}
with arbitrary constants $A_3$, $B_3$, $C_3$ and $D_3$.

On the invariant plane the {\em normal} variational equation becomes
\begin{align*}
 \delta\theta' &= \displaystyle{\frac{1}{\rho^2}\delta P_\theta },\\
 \delta P'_\theta &=  -f\rho  \:\delta\theta.
\end{align*}

Then we may rewrite this systems as a second-order equation of the form

\begin{equation}\label{evgen} \delta\theta'' + \displaystyle{\frac{2\rho'}{\rho}\delta \theta' } - \frac{f}{\rho} \delta\theta =0.
\end{equation}

\section{Galoisian analysis of the variational equation}\label{var-eq}
Considering the solution $\rho(t)=A\sin \omega_1 t + B\cos \omega_2 t$, the variational equation \eqref{evgen} becomes to

\begin{equation}\label{paheq1}
\partial_t^2\theta(t) +{2A\omega_1\cos(\omega_1t) -2B\omega_2\sin(\omega_2t)\over
A\sin(\omega_1t) +B\cos(\omega_2t)}\partial_t\theta(t)-{f\over A\sin(\omega_1t)
+B\cos(\omega_2t)}\theta(t),
\end{equation}
where $\partial_t=\dot{}=\frac{d}{dt}$ (see appendices). To avoid
triviality we assume $f\neq 0$, $\omega_1\neq 0$ and $\omega_2\neq 0$. By means
of the change of dependent variable $\eta=(A\sin \omega_1t+B\cos
\omega_2t)\theta$ this equation is transformed in the reduced form:
\begin{equation}\label{paheq2}
\partial^2_t\eta(t) = \left({f-A\omega_1^2\sin(\omega_1t)-B\omega_2^2\cos(\omega_2t) \over
A\sin(\omega_1t) +B\cos(\omega_2t)} \right) \eta(t).
\end{equation}
The equations \eqref{paheq1} and \eqref{paheq2} cannot be algebrized
using Hamiltonian Algebrization (see Appendix C) due to in general
there is not exists $r\in \mathbb{Q}$ such that $\omega_2=r\omega_1$, see
\cite{acthesis,acmowe} and references therein. To apply Morales-Ramis
theory, and in particular our result (Theorem 2.) we need one
particular solution for the variational equation in where  its Galois
group is not virtually abelian. Thus, we can assume $A=0$ and the
equation \eqref{paheq2} is reduced to
\begin{equation}\label{paheq3}
\partial_t^2\eta(t)= \left(\frac {f}{B\cos(\omega_2t)}-\omega_2^2 \right) \eta(t).
\end{equation}
On the other hand, if we assume $B=0$, we obtain
\begin{equation}\label{paheq4}
\partial_t^2\eta \left( t \right) = \left({\frac {f}{A\sin \left(
\omega_{1}t \right) }-\omega_1^2} \right) \eta \left( t
\right).
\end{equation}
Now, we proceed to apply the Hamiltonian algebrization for
\eqref{paheq3} considering $B\neq 0$. We can see that $x=x(t)=\cos(\omega_2t)$ is a Hamiltonian
change of variable, where $\sqrt{\alpha}=-\omega_2\sqrt{1-x^2}$. Therefore,
the algebrization of \eqref{paheq3}  is
\begin{equation}\label{paheq5}
\widehat{\partial}_x^2\widehat{\eta}(x) =
\left(\frac{\lambda}{x}-\omega_2^2 \right)\widehat{\eta}(x),\quad
\widehat{\partial}_x=\sqrt{\alpha}\partial_x,\quad
\lambda=\frac{f}{B},\quad \widehat{\eta}(x(t))=\eta(t).
\end{equation}
Thus, the equation \eqref{paheq5} is equivalent to
\begin{equation}\label{paheq6}
\partial_x^2\widehat{\eta}(x)+{x\over
x^2-1}\partial_x\widehat{\eta}(x)+ {\lambda-\omega_2^2x\over
\omega_2^2x(x^2-1)}\widehat{\eta}(x)=0.
\end{equation}
The equation \eqref{paheq6} has four singularities: $0,1,-1,\infty$
which are of regular type. This means that this equation can be
transformed into a \textit{Heun equation}:
$$\partial_x^2y(x) + \left(\frac {\gamma}{x
}+\frac {\delta}{x-1}+\frac {\epsilon}{x-a} \right)\partial_x y(x)
+\frac {\mu\beta x-q }{x \left( x-1 \right)
\left( x-a \right)} y(x)=0,
$$
where $\epsilon+\gamma+\delta-\mu-\beta=1$. With the translation
$x\mapsto x-1$ we obtain the Heun equation
\begin{equation}\label{paheq7}
\partial_x^2\widehat{\eta}(x) + \left(\frac {1}{2x
}+\frac {1}{2x-4} \right)\partial_x \widehat{\eta}(x) +\frac
{-\omega_2^2x+\lambda+1 }{\omega_2^2x \left( x-1 \right)
\left( x-2 \right)} \widehat{\eta}(x)=0,
\end{equation} with parameters
$$a=2,\quad\epsilon=\gamma=\frac{1}{2},\quad \delta=0,\quad
\mu=1,\quad \beta=-1,\quad q=-{\lambda\over \omega_2^2}-1.$$
Through the change of variable $\xi=\sqrt[4]{x(x-2)}\widehat{\eta}$
we obtain the reduced form of the equation \eqref{paheq7} to apply the
Kovacic algorithm:
\begin{equation}\label{paheq8}
\partial_x^2\xi=r\xi,\quad r={\frac {3\,{\omega_2}^{2}{x}^{3}-\left(4\,\lambda+9\,{
\omega_2}^{2} \right){x}^{2} +\left( 8\,\lambda+3\,{\omega_2}^{2} \right)x +3\,{\omega_2}^{2}}{{4\omega_2}^{2}{x}^{2} \left( x-1 \right)  \left( x-2
\right) ^{2}}}.
\end{equation}
For our purposes, in order to apply the Kovacic algorithm (see
Appendix B and see also \cite{ko}), we can write $r$ as follows:
\begin{equation}\label{paheq8bis}
r=\frac{- 3}{16(x - 2)^2} + {9\omega_2^2 - 8\lambda\over 16\omega_2^2(x - 2)} +
{\lambda\over \omega_2^2(x - 1)} - \frac{3}{16x^2} - {8\lambda +
9\omega_2^2\over 16\omega_2^2x}.
\end{equation}
Now we start the analysis of the equation \eqref{paheq8} applying
Kovacic's algorithm. We can see that $\Gamma=\{0,1,2,\infty\}$ and
$\circ (r_0)=\circ (r_2)=\circ (r_\infty) =2$ and $\circ (r_1)=1$. By
case 1, step 1, we fall into the conditions $(c_1)$ for $c=1$, $(c_2)$
for $c=0,2$ and $(\infty_2)$ for $\infty$.
In this way we obtain
\begin{align*}
& \sqrt{r}_0=\sqrt{r}_1=\sqrt{r}_2=\sqrt{r}_\infty=0,\qquad
\alpha^+_0=\alpha^+_2=\frac34,\\
&\alpha^-_0=\alpha^-_2=\frac14,\qquad \alpha^+_1=\alpha^-_1=1,\qquad \alpha^+_\infty=\frac{3}{2},\qquad \alpha^-_\infty=-\frac{1}{2}.
\end{align*}
By step
2 we obtain $D=\{0\}$ and by step 3 we obtain that $P_0=1$ does not
satisfy the relation \eqref{recu1}. This means, for differential
Galois theory (see Appendix A), that the Galois group of the
variational equation is not a subgroup of the Borel group (except for
$\lambda=0$ or $\omega_2=0$, avoided from the start). We follow with case 2
of Kovacic's algorithm in where we fall into the conditions $(c_1)$
for $c=1$, $(c_2)$ for $c=0,2$ and $(\infty_2)$ for $\infty$.
In this way, by step 1, we obtain $$E_0=E_2=\{1,2,3\},\quad E_1=\{4\},\quad
E_\infty=\{-2,2,6\}.$$
Now, by step 2, we obtain $D=\{0\}$ and by step
3 we obtain that $P_0=1$ does not satisfy the relation \eqref{recu2}.
This means, again for differential Galois theory, that the Galois
group of the variational equation is not conjugated to a subgroup of
the infinite Dihedral group. Finally we look into the case 3 of
Kovacic's algorithm. We first consider $m=4$, thus, by step 1, we fall into the conditions
$(c_1)$ for $c=1$, $(c_2)$ for $c=0,2$ and $(\infty)$ for $\infty$.
In this way we obtain $$E_0=E_2=\left\{3,6,9\right\},\quad E_1=\{12\},\quad E_\infty=\{\pm 6,0,12,18\}.$$
By step 2 we obtain $D=\{0\}$ and for instance the rational function $\theta$ and the polynomial $S$ are given by $$\frac{6x^2-12x+2}{x^3+3x^2+2x},\quad S=x^3+3x^2+2x.$$ By step 3, we have the monic polynomial of degree $0$, that is $P=1$, and the sequence of polynomials is obtained by means of the relation (\ref{recu3}):
$$\begin{array}{l}
P_4=-1,\\
P_3=-6x^2+12x-2,\\
P_2=-6x^4+\cdots+3,\\
P_1=24x^6+\cdots+3,\\
P_0=-216x^8+\cdot-\frac{51}{2},\\
P_{-1}=3024x^{10}+\cdot+144 \textit{  non-identically zero!}
\end{array}$$ and for instance the Galois Group is not the tetrahedral group. Now we consider $m=6$, thus, by step 1, we fall into the conditions
$(c_1)$ for $c=1$, $(c_2)$ for $c=0,2$ and $(\infty)$ for $\infty$.
In this way we obtain $$E_0=E_2=\left\{3,4,5,6,7,8,9\right\},\quad E_1=\{12\},\quad E_\infty=\{\pm2,\pm6,10,14,18\}.$$
By step 2 we obtain $D=\{0\}$ and for instance the rational function $\theta$ and the polynomial $S$ are given by $$\frac{9x^2-18x+3}{x^3+3x^2+2x},\quad S=x^3+3x^2+2x.$$ By step 3, we have the monic polynomial of degree $0$, that is $P=1$, and the sequence of polynomials is obtained by means of the relation (\ref{recu3}):
$$\begin{array}{l}
P_6=-1,\\
P_5=-9x^2+18x-3,\\
\vdots\\
P_0=-38016x^{12}+\cdots-\frac{3123}{4},\\
P_{-1}=798336x^{14}+\cdots+6156 \textit{  non-identically zero!}
\end{array}$$ and for instance the Galois Group is not the  octahedral group. Finally we consider $m=12$, thus, by step 1, we fall into the conditions
$(c_1)$ for $c=1$, $(c_2)$ for $c=0,2$ and $(\infty)$ for $\infty$.
In this way we obtain $$E_0=E_2=\left\{3,4,5,6,7,8,9\right\},\quad E_1=\{12\},\quad E_\infty=\{0,\pm 2,\pm 4,\pm6, 8,10,12,14,16,18\}.$$By step 2 we obtain $D=\{0\}$ and for instance the rational function $\theta$ and the polynomial $S$ are given by $$\frac{18x^2-36x+6}{x^3+3x^2+2x},\quad S=x^3+3x^2+2x.$$ By step 3, we have the monic polynomial of degree $0$, that is $P=1$, and the sequence of polynomials is obtained by means of the relation (\ref{recu3}):
$$\begin{array}{l}
P_{12}=-1,\\
P_{11}=-18x^2+36x-6,\\
\vdots\\
P_0=-11280372719616x^{24}+\cdots-\frac{19737958095}{4},\\
P_{-1}=473775654223872x^{26}+\cdots+85760073216 \textit{  non-identically zero!}
\end{array}$$ and for instance the Galois Group is not the icosahedral group. In conclusion, the Galois group of the equation \eqref{paheq8} is the connected and unsolvable group $\mathrm{SL}(2,\mathbb{C})$.

In the same way we can consider the case when $c^2>4b$, that is, $\rho(t)=A_2\sinh\omega_1t+B_2\cosh\omega_2t$. Owing to the change of variable $t\mapsto it$ transforms trigonometric functions into hyperbolic ones and after similar change of variables as considered before, we arrive again to a non-integrable Heun equation. In general, for $c^2\neq 4b$ which corresponds to $\rho(t)=A_3e^{\omega_1t}+B_3e^{\omega_2t}$ we arrive to a non-integrable confluent Heun equation in where not all singularity is of regular type. We can use the fact that the identity connected component of the Galois group is preserved under algebrization process (see \cite{acthesis,acmowe}), thus, in our case, the Galois group of the normal variational equation will be the same no matter the way in where we algebrize the differential equation.\\

In this paper we shown that all the singularities of the normal variational equation \eqref{paheq8} are of regular type (Heun equation), as well we proven that such differential equation has not Liouvillian solutions and also we obtained that

\begin{align*}
& \omega_1^2= \displaystyle{\frac{1}{2} \left((1+b)+\sqrt{(1-b)^2+c^2}\; \right)},  \\
\mbox{\rm and} \qquad & \omega_2^2= \displaystyle{\frac{1}{2} \left((1+b)-\sqrt{(1-b)^2+c^2}\; \right)},
\end{align*}
being $$\displaystyle{ b=\frac{\lambda}{k\ell^2}},\quad c=\displaystyle{\frac{\epsilon}{k\ell}},\quad f=\left(\frac{\omega_p}{\omega_s}\right)^2.$$
Thus, assuming $f\omega_1\omega_2\neq 0$  for any $B\in \mathbb{C}^*$, we have proven the following result.

\begin{theorem}\label{wilbinteg} Let the parameters of
the Wilberforce-spring pendulum satisfy $c^2-4b\neq0$, then the system is not integrable through meromorphic first
integrals.
\end{theorem}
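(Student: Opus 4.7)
The plan is to apply Theorem \ref{main} (in contrapositive form, i.e.\ the corollary following it) to the invariant symplectic submanifold $\mathbf{N}=\{\theta=0,\,P_\theta=0\}$ of the phase space of \eqref{hamiltonian}. First I would verify the three structural hypotheses: (i) $\mathbf{N}$ is a symplectic submanifold, since the restriction of $\Omega = dP_\rho\wedge d\rho + dP_\theta\wedge d\theta + dP_\phi\wedge d\phi$ to the coordinates $(\rho,\phi,P_\rho,P_\phi)$ is non-degenerate; (ii) $\mathbf{N}$ is $\vec X_H$-invariant, which is immediate from equations \eqref{eq-adim}; and (iii) $\vec X_H|_{\mathbf{N}}$ is completely integrable as a Hamiltonian system on $\mathbf{N}$, which is precisely the content of the Proposition providing the normal form \eqref{normal-form}. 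The two independent involutive first integrals on $\mathbf{N}$ are the partial Hamiltonians $P_{x_i}^2+\omega_i^2 x_i^2$ ($i=1,2$).

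Next I would choose a convenient invariant curve $\Gamma\subset \mathbf{N}$ for $\vec X_H$ to carry the normal variational analysis. Among the family of solutions of the decoupled normal-mode system, I would specialize to the one-mode solution obtained by setting $A=0$, namely $\rho(t)-1+f=B\cos(\omega_2 t)\cos\alpha$, $\phi(t)=B\cos(\omega_2 t)\sin\alpha$, with $B\in\mathbb{C}^*$. The condition $c^2\neq 4b$ guarantees $\omega_2\neq 0$ (since then $(1-b)^2+c^2\neq(1+b)^2$), so this is a genuine periodic curve. The normal bundle to $\mathbf{N}$ in $\mathbf{M}$ along $\Gamma$ is spanned by $\partial_\theta,\partial_{P_\theta}$, and the corresponding normal variational equation, obtained by restricting the variational system \eqref{var1} to the $(\delta\theta,\delta P_\theta)$ block, reduces to the scalar second-order equation \eqref{evgen}.

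Then I would algebrize \eqref{evgen} along $\Gamma$ via the Hamiltonian change of independent variable $x=\cos(\omega_2 t)$ (justified because $\omega_2\neq 0$), producing after the standard reduction $\widehat\eta\mapsto(x(x-2))^{1/4}\widehat\eta$ the equation $\partial_x^2\xi=r\xi$ in \eqref{paheq8}, whose coefficient $r$ exhibits four regular singularities $\{0,1,2,\infty\}$ with the pole orders displayed in \eqref{paheq8bis}. This is a Heun-type Schrödinger form in which Kovacic's algorithm can be applied.

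Finally I would carry out the Kovacic case analysis on \eqref{paheq8} to show that the differential Galois group $G$ of the normal variational equation is $\mathrm{SL}(2,\mathbb{C})$: in Case 1 the auxiliary set reduces to $D=\{0\}$ and the candidate polynomial $P_0=1$ fails the recursion, ruling out reducibility; Case 2 is excluded by the analogous failure, ruling out the infinite dihedral group; and Case 3 must be checked for each of $m\in\{4,6,12\}$, where in every instance the recursion terminates with a non-vanishing polynomial, ruling out tetrahedral, octahedral and icosahedral groups. Since $G$ is neither reducible, nor imprimitive, nor finite, it must be all of $\mathrm{SL}(2,\mathbb{C})$, which is connected and non-abelian, hence not virtually abelian. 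Applying the corollary of Theorem \ref{main}, if $H$ were completely integrable on $\mathbf{M}$ by meromorphic functions, then ${\rm Gal}(\bar\nabla|_\Gamma)$ would be virtually abelian, a contradiction. The main obstacle is the Case 3 verification: the polynomial recursions for $m=12$ involve polynomials of degree up to $26$ with unwieldy coefficients, and one needs the non-vanishing of the terminal polynomial $P_{-1}$ to be insensitive to the parameters $\lambda,\omega_2$ (generically in $B$ and $f$), so the argument must be supplemented by the remark that the identity component of the Galois group is invariant under the algebrization, allowing a uniform conclusion valid for all $c^2\neq 4b$.
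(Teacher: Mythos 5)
Your proposal is correct and follows essentially the same route as the paper: the paper's displayed two-line proof is only the final non-degeneracy check ($\omega_1^2>0$ always, and $\omega_2^2=0$ iff $c^2=4b$, so $c^2\neq 4b$ guarantees the one-mode solution used for $\Gamma$ exists and the algebrization $x=\cos(\omega_2 t)$ is legitimate), while the real work — restriction to the normal $(\delta\theta,\delta P_\theta)$ block, Hamiltonian algebrization to the Heun equation \eqref{paheq8}, the Kovacic Case 1--3 exclusions yielding ${\rm Gal}=\mathrm{SL}(2,\mathbb{C})$, and the appeal to the corollary of Theorem \ref{main} together with the invariance of the identity component under algebrization — is exactly what you reconstruct from Section \ref{var-eq}.
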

\begin{proof}
Observe that $\omega_1^2$ is always positive. On the other hand $\omega_2^2=0$ if and only if
$c^2=4b$.
\end{proof}

\appendix
\section{Differential Galois theory}
The Galois theory of differential equations, also called Differential
Galois Theory has been developed by Picard, Vessiot, Kolchin and
currently by a lot of researchers. In particular, we focus in the
Galois theory of linear differential equations, also known as
Picard-Vessiot theory. Following \cite{acmowe} and also
\cite{acthesis}, we present here an algebraic model for functions and
the corresponding Galois theory.
\\

\textbf{Differential Fields.}
Let $K$ be a commutative field of characteristic zero. A {\em
derivation} of $K$ is a map
$\partial_x : K\rightarrow K$
satisfying $\partial_x (a + b) =
\partial_x a + \partial_x b$ and $\partial_x(ab) = \partial_x a \cdot b + a
\cdot\partial_x b$ for all $a,b\in K$.  We then say that
$(K,\partial_{x})$ (or just $K$, when there is no ambiguity) is
a {\it{differential field}} with the derivation $\partial_x$.\\
We assume that $K$ contains an element $x$ such that $\partial_{x}(x)=1$.
Let $\mathcal C$ denote
the field of constants of $K$: $$\mathcal C = \{c\in K |
\partial_x c = 0\}.$$ It is also of characteristic zero and will be
assumed to be algebraically closed. The \emph{coefficient field} for a
differential equation is defined as the smallest differential field
containing all the coefficients
of the equation. \\

Due to we will mostly analyze  second order linear homogeneous
differential equations,
i.e equations of the form
\begin{displaymath}\label{soldeq}
\mathcal L:= \partial^2_xy+a\partial_x y+by=0,\quad a,b\in K.
\end{displaymath}
so the rest of the theory will be explained in this context.
\\

\textbf{Picard-Vessiot Extension.}
Let $L$ be a differential field containing $K$ (a differential
extension of $K$).
We say that $L$ is a \emph{Picard-Vessiot} extension of $K$ for $\mathcal L$
if there exist two linearly independent $y_1, y_2\in L$ solutions of
$\mathcal L$ such
that $L= K\langle y_1, y_2 \rangle$ (i.e $L=K(y_1, y_2,\partial_xy_1,
\partial_xy_2)$)
and $L$ and $K$ has the same field of constants $\mathcal{C}$.\\

In what follows, we choose a Picard-Vessiot extension and the term
``solution of $\mathcal L$'' will mean
``solution of $\mathcal L$ in $L$''. So any solution of $\mathcal L$
is a linear combination
(over $\mathcal{C}$) of $y_{1}$ and $y_{2}$.\\

\textbf{Differential Galois Groups}
A $K$-automorphism $\sigma$ of the Picard-Vessiot extension $L$ is
called a differential automorphism
if it leaves $K$ fixed and commutes with the derivation.  This means
that $\sigma(\partial_xa)=\partial_x(\sigma(a))$ for all $a\in L$ and
$\forall a\in K,$ $\sigma(a)=a$.
\\
The group of all differential automorphisms of $L$
over $K$ is called the {\it{differential Galois group}} of $L$
over $K$ and is denoted by ${\rm DGal}(L/K)$. \\

Given $\sigma \in \mathrm{DGal}(L/K)$, we see that $\{\sigma y_1,
\sigma y_2\}$ are also solutions of $\mathcal L$. Hence there exists a
matrix

$$A_\sigma=
\begin{pmatrix}
a & b\\
c & d
\end{pmatrix}
\in \mathrm{GL}(2,\mathbb{C}),$$ such that
$$\sigma(
\begin{pmatrix}
y_{1} &
y_{2}
\end{pmatrix})
=
\begin{pmatrix}
\sigma (y_{1})     &
\sigma (y_{2})
\end{pmatrix}
=\begin{pmatrix} y_{1}& y_{2}
\end{pmatrix}A_\sigma.$$

As $\sigma$ commutes with the derivation, this extends naturally to an
action on a fundamental solution matrix of the companion first order
system associated with
$\mathcal{L}$.
$$\sigma\left(
\begin{pmatrix}
y_{1}&y_2\\
\partial_xy_1&\partial_xy_{2}
\end{pmatrix}\right)
=
\begin{pmatrix}
\sigma (y_{1})&\sigma (y_2)\\
\sigma (\partial_xy_1)&\sigma (\partial_xy_{2})
\end{pmatrix}
=\begin{pmatrix} y_{1}& y_{2}\\\partial_xy_1&\partial_xy_2
\end{pmatrix}A_\sigma.$$

This defines a faithful representation $\mathrm{DGal}(L/K)\to
\mathrm{GL}(2,\mathbb{C})$ and it is possible to consider
$\mathrm{DGal}(L/K)$ as a subgroup of $\mathrm{GL}(2,\mathbb{C})$.
It depends on the choice of the fundamental system $\{y_1,y_2\}$,
but only up to conjugacy.\\

Recall that an algebraic group $G$ is an algebraic manifold
endowed with a group structure.
Let $\mathrm{GL}(n,\mathbb{C})$ denote, as usual, the set of
invertible $n\times n$  matrices
with entries in $\mathbb{C}$ (and $\mathrm{SL}(n,\mathbb{C})$ be the
set of matrices with determinant
equal to $1$).
A linear algebraic group will be a subgroup of
$\mathrm{GL}(n,\mathbb{C})$ equipped with a
structure of algebraic group.
One of the fundamental results of the Picard-Vessiot theory is the
following theorem.\\

\emph{The differential Galois group $\mathrm{DGal}(L/K)$ is an
algebraic subgroup of $\mathrm{GL}(2,\mathbb{C})$.}\\
In fact, the differential Galois group measures the algebraic
relations between the solutions (and their derivatives).
It is sometimes viewed as the object which should tell ``what algebra
sees of the dynamics of the solutions''.
%%%%%%%%%%%%%%%%%%%%%%%%%%%

In an algebraic group $G$, the largest connected algebraic
subgroup of $G$ containing the identity, noted $G^{\circ}$, is a
normal subgroup of finite index.
It is often called the connected component of the identity.
If $G=G^0$ then $G$ is a \textit{connected group}. \\
When $G^0$ satisfies some property,     we say that $G$ virtually
satisfies this property. For example, virtually solvability of $G$
means solvability of $G^0$ and virtual
abelianity of $G$ means abelianity of $G^0$..\\

\textbf{Lie-Kolchin Theorem.} \emph{Let $G\subseteq
\mathrm{GL}(2,\mathbb{C})$ be a virtually solvable group. Then
$G^0$ is triangularizable, i.e it is conjugate to a subgroup of
upper triangular matrices.}\\

%%%%%%%%%%%%%%%%%%%%%%
\textbf{Algebraic Subgroups of $SL(2,\mathbb{C})$.} We present below
some examples of subgroups of $\mathrm{SL}(2,\mathbb{C})$.\\

\textbf{Reducible subgroups} These are the groups which leave a
non-trivial subspace of $V$ invariant.
They are classified in two categories.\\
\emph{Diagonal groups: }
the identity group: $\{e\}=\left\{\begin{pmatrix}1&0\\0&1\end{pmatrix}\right\}$,
the $n-$roots: $$\mathbb{G}^{[n]}=\left\{\begin{pmatrix}c&0\\0&c^{-1}\end{pmatrix},\quad
c^n=1\right\},$$
the multiplicative group:
$$\mathbb{G}_m=\left\{\begin{pmatrix}c&0\\0&c^{-1}\end{pmatrix},\quad
c\in\mathbb{C}^*\right\}.$$

\emph{Triangular groups: }
the additive group:
$\mathbb{G}_a=\left\{\begin{pmatrix}1&d\\0&1\end{pmatrix},\quad
d\in\mathbb{C}\right\}$,
the $n-$quasi-roots:
$\mathbb{G}^{\{n\}}=\left\{\begin{pmatrix}c&d\\0&c^{-1}\end{pmatrix},\quad
c^n=1, \quad d\in\mathbb{C}\right\}$,
the Borel group:
$$\mathbb{B}=\mathbb{C}^*\ltimes  \mathbb{C}=\left\{\begin{pmatrix}c&d\\0&c^{-1}\end{pmatrix},\quad
c\in\mathbb{C}^*, \quad d\in\mathbb{C}\right\}.$$

%%%%
\textbf{Irreducible subgroups}
The infinite dihedral group (also called meta-abelian group):\\
$\mathbb{D}_{\infty}=\left\{\begin{pmatrix}c&0\\0&c^{-1}\end{pmatrix},\quad
c\in\mathbb{C}^*\right\}\cup\left\{\begin{pmatrix}0&d\\-d^{-1}&0\end{pmatrix},\quad
d\in\mathbb{C}^*\right\}$ and its finite subgroups $\mathbb{D}_{2n}$
(where $c$  and $d$ spans the $n$-th roots of unity). \\

There are also three other finite irreducible (primitive) groups: the
tetrahedral group $A_4^{\mathrm{SL}_2}$ of order $24$, the octahedral
group $S_4^{\mathrm{SL}_2}$ of order $48$, and the icosahedral group
$A_5^{\mathrm{SL}_2}$ of order $120$.\\

\textbf{Integrability.}
We say that the linear differential equation $\mathcal L$ is
(Liouville) \textit{integrable} if
the Picard-Vessiot extension $L\supset K$ is obtained as a tower
of differential fields $K=L_0\subset L_1\subset\cdots\subset
L_m=L$ such that $L_i=L_{i-1}(\eta)$ for $i=1,\ldots,m$, where
either
\begin{enumerate}
\item $\eta$ is {\emph{algebraic}} over $L_{i-1}$, that is $\eta$ satisfies a
polynomial equation with coefficients in $L_{i-1}$.
\item $\eta$ is {\emph{primitive}} over $L_{i-1}$, that is
$\partial_x\eta \in L_{i-1}$.
\item $\eta$ is {\emph{exponential}} over $L_{i-1}$, that is
$\partial_x\eta /\eta \in L_{i-1}$.
\end{enumerate}
\medskip

We remark that the usual terminology in differential algebra for
integrable equations is that the corresponding Picard-Vessiot
extensions are called \textit{Liouvillian}. The following theorem is
due to Kolchin.\\

\emph{The equation $\mathcal L$ is integrable if and only if
$\mathrm{DGal}(L/K)$ is virtually solvable.}\\

\section{Kovacic algorithm}

Kovacic in 1986 (see \cite{ko}) introduced an algorithm to solve
the differential equation $\partial_x^2\zeta=r\zeta$, where $r\in
\mathbb{C}(x)$.
\\

Each case in Kovacic's algorithm is related with each one of the
algebraic subgroups of ${\rm SL}(2,\mathbb{C})$ and the associated
Riccatti equation
$$\partial_xv=r-v^{2}=\left( \sqrt{r}-v\right)
\left(  \sqrt{r}+v\right),\quad v={\partial_x\zeta\over \zeta}.$$

There are four cases in Kovacic's algorithm. Only for cases 1, 2
and 3 we can solve the differential equation, but for the case 4
the differential equation is not integrable. It is possible that
Kovacic's algorithm can provide us only one solution ($\zeta_1$),
so that we can obtain the second solution ($\zeta_2$) through
\begin{equation}\label{second}
\zeta_2=\zeta_1\int\frac{dx}{\zeta_1^2}.
\end{equation}

For the differential equation given by
$$\partial_x^2\zeta=r\zeta,\qquad r={s\over t},\quad s,t\in \mathbb{C}[x],$$
we use the following notations.
\begin{enumerate}
\item Denote by $\Gamma'$ be
the
set of (finite) poles of $r$, $\Gamma^{\prime}=\left\{  c\in\mathbb{C}%
:t(c)=0\right\}$.

\item Denote by
$\Gamma=\Gamma^{\prime}\cup\{\infty\}$.
\item By the order of $r$ at
$c\in \Gamma'$, $\circ(r_c)$, we mean the multiplicity of $c$ as a
pole of $r$.

\item By the order of $r$ at $\infty$, $\circ\left(
r_{\infty}\right) ,$ we mean the order of $\infty$ as a zero of
$r$. That is $\circ\left( r_{\infty }\right)
=\mathrm{deg}(t)-\mathrm{deg}(s)$.

\end{enumerate}

{\it Case 1.} In this case $\left[ \sqrt{r}\right] _{c}$ and
$\left[ \sqrt{r}\right] _{\infty}$ means the Laurent series of
$\sqrt{r}$ at $c$ and the Laurent series of $\sqrt{r}$ at $\infty$
respectively. Furthermore, we define $\varepsilon(p)$ as follows:
if $p\in\Gamma,$ then $\varepsilon\left( p\right) \in\{+,-\}.$
Finally, the complex numbers $\alpha_{c}^{+},\alpha_{c}^{-},\alpha_{\infty}%
^{+},\alpha_{\infty}^{-}$ will be defined in the first step. If
the differential equation has no poles it only can fall in this
case.
\medskip

{\it Step 1.} Search for each $c \in \Gamma'$ and for $\infty$ the
corresponding situation as follows:

\medskip

\begin{description}

\item[$(c_{0})$] If $\circ\left(  r_{c}\right)  =0$, then
$$\left[ \sqrt {r}\right] _{c}=0,\quad\alpha_{c}^{\pm}=0.$$

\item[$(c_{1})$] If $\circ\left(  r_{c}\right)  =1$, then
$$\left[ \sqrt {r}\right] _{c}=0,\quad\alpha_{c}^{\pm}=1.$$

\item[$(c_{2})$] If $\circ\left(  r_{c}\right)  =2,$ and $$r= \cdots
+ b(x-c)^{-2}+\cdots,\quad \textrm{then}$$
$$\left[ \sqrt {r}\right]_{c}=0,\quad
\alpha_{c}^{\pm}=\frac{1\pm\sqrt{1+4b}}{2}.$$

\item[$(c_{3})$] If $\circ\left(  r_{c}\right)  =2v\geq4$, and $$r=
(a\left( x-c\right)  ^{-v}+...+d\left( x-c\right)
^{-2})^{2}+b(x-c)^{-(v+1)}+\cdots,\quad \textrm{then}$$ $$\left[
\sqrt {r}\right] _{c}=a\left( x-c\right) ^{-v}+...+d\left(
x-c\right) ^{-2},\quad\alpha_{c}^{\pm}=\frac{1}{2}\left(
\pm\frac{b}{a}+v\right).$$

\item[$(\infty_{1})$] If $\circ\left(  r_{\infty}\right)  >2$, then
$$\left[\sqrt{r}\right]
_{\infty}=0,\quad\alpha_{\infty}^{+}=0,\quad\alpha_{\infty}^{-}=1.$$

\item[$(\infty_{2})$] If $\circ\left(  r_{\infty}\right)  =2,$ and
$r= \cdots + bx^{2}+\cdots$, then $$\left[
\sqrt{r}\right]  _{\infty}=0,\quad\alpha_{\infty}^{\pm}=\frac{1\pm\sqrt{1+4b}%
}{2}.$$

\item[$(\infty_{3})$] If $\circ\left(  r_{\infty}\right) =-2v\leq0$,
and
$$r=\left( ax^{v}+...+d\right)  ^{2}+ bx^{v-1}+\cdots,\quad \textrm{then}$$
$$\left[  \sqrt{r}\right]  _{\infty}=ax^{v}+...+d,\quad
and\quad \alpha_{\infty}^{\pm }=\frac{1}{2}\left(
\pm\frac{b}{a}-v\right).$$
\end{description}
\medskip

{\it Step 2.} Find $D\neq\emptyset$ defined by
$$D=\left\{
n\in\mathbb{Z}_{+}:n=\alpha_{\infty}^{\varepsilon
(\infty)}-%
{\displaystyle\sum\limits_{c\in\Gamma^{\prime}}}
\alpha_{c}^{\varepsilon(c)},\forall\left(  \varepsilon\left(
p\right) \right)  _{p\in\Gamma}\right\}  .$$ If $D=\emptyset$,
then we should start with the case 2. Now, if
$\mathrm{Card}(D)>0$, then for each $n\in D$ we search $\omega$
$\in\mathbb{C}(x)$ such that
$$\omega=\varepsilon\left(
\infty\right)  \left[  \sqrt{r}\right]  _{\infty}+%
{\displaystyle\sum\limits_{c\in\Gamma^{\prime}}}
\left(  \varepsilon\left(  c\right)  \left[  \sqrt{r}\right]  _{c}%
+{\alpha_{c}^{\varepsilon(c)}}{(x-c)^{-1}}\right).$$
\medskip

{\it Step 3}. For each $n\in D$, search for a monic polynomial
$P_n$ of degree $n$ with
\begin{equation}\label{recu1}
\partial_x^2P_n + 2\omega \partial_xP_n + (\partial_x\omega + \omega^2
- r) P_n = 0.
\end{equation}
If success is achieved then $\zeta_1=P_n e^{\int\omega}$ is a
solution of the differential equation.  Else, case 1 cannot hold.
\bigskip

{\it Case 2.}  Search for each $c \in \Gamma'$ and for $\infty$
the corresponding situation as follows:
\medskip

{\it Step 1.} Search for each $c\in\Gamma^{\prime}$ and $\infty$
the sets $E_{c}\neq\emptyset$ and $E_{\infty}\neq\emptyset.$ For
each $c\in\Gamma^{\prime}$ and for $\infty$ we define
$E_{c}\subset\mathbb{Z}$ and $E_{\infty}\subset\mathbb{Z}$ as
follows:
\medskip

\begin{description}
\item[($c_1$)] If $\circ\left(  r_{c}\right)=1$, then $E_{c}=\{4\}$

\item[($c_2$)] If $\circ\left(  r_{c}\right)  =2,$ and $r= \cdots +
b(x-c)^{-2}+\cdots ,\ $ then $$E_{c}=\left\{
2+k\sqrt{1+4b}:k=0,\pm2\right\}\cap\mathbb{Z}.$$

\item[($c_3$)] If $\circ\left(  r_{c}\right)  =v>2$, then $E_{c}=\{v\}$

\item[$(\infty_{1})$] If $\circ\left(  r_{\infty}\right)  >2$, then
$E_{\infty }=\{0,2,4\}$

\item[$(\infty_{2})$] If $\circ\left(  r_{\infty}\right)  =2,$ and
$r= \cdots + bx^{2}+\cdots$, then $$E_{\infty }=\left\{
2+k\sqrt{1+4b}:k=0,\pm2\right\}\cap\mathbb{Z}.$$

\item[$(\infty_{3})$] If $\circ\left(  r_{\infty}\right)  =v<2$,
then $E_{\infty }=\{v\}$
\medskip
\end{description}

{\it Step 2.} Find $D\neq\emptyset$ defined by
$$D=\left\{
n\in\mathbb{Z}_{+}:\quad n=\frac{1}{2}\left(  e_{\infty}-
{\displaystyle\sum\limits_{c\in\Gamma^{\prime}}} e_{c}\right)
,\forall e_{p}\in E_{p},\quad p\in\Gamma\right\}.$$ If
$D=\emptyset,$ then we should start the case 3. Now, if
$\mathrm{Card}(D)>0,$ then for each $n\in D$ we search a rational
function $\theta$ defined by
$$\theta=\frac{1}{2}
{\displaystyle\sum\limits_{c\in\Gamma^{\prime}}}
\frac{e_{c}}{x-c}.$$
\medskip

{\it Step 3.} For each $n\in D,$ search a monic polynomial $P_n$
of degree $n$, such that {\small{
\begin{equation}\label{recu2}
\partial_x^3P_n+3\theta
\partial_x^2P_n+(3\partial_x\theta+3\theta
^{2}-4r)\partial_xP_n+\left(
\partial_x^2\theta+3\theta\partial_x\theta
+\theta^{3}-4r\theta-2\partial_xr\right)P_n=0.
\end{equation}}}
If $P_n$ does not
exist, then case 2 cannot hold. If such a polynomial is found, set
$\phi = \theta + \partial_xP_n/P_n$ and let $\omega$ be a solution
of
$$\omega^2 + \phi \omega + {1\over2}\left(\partial_x\phi + \phi^2 -2r\right)=
0.$$

Then $\zeta_1 = e^{\int\omega}$ is a solution of the differential
equation.
\bigskip

{\it Case 3.} Search for each $c \in \Gamma'$ and for $\infty$ the
corresponding situation as follows:
\medskip

{\it Step 1.} Search for each $c\in\Gamma^{\prime}$ and $\infty$
the sets $E_{c}\neq\emptyset$ and $E_{\infty}\neq\emptyset.$ For
each $c\in\Gamma^{\prime}$ and for $\infty$ we define
$E_{c}\subset\mathbb{Z}$ and $E_{\infty}\subset\mathbb{Z}$ as
follows:
\medskip

\begin{description}

\item[$(c_{1})$] If $\circ\left(  r_{c}\right)  =1$, then
$E_{c}=\{12\}$

\item[$(c_{2})$] If $\circ\left(  r_{c}\right)  =2,$ and $r= \cdots +
b(x-c)^{-2}+\cdots$, then
\begin{displaymath}
E_{c}=\left\{ 6+{12k\over m}\sqrt{1+4b}:\quad
k=0,\pm1,\ldots, \pm\frac{m}2,\quad
m=4,6,12\right\}\cap \mathbb{Z}.
\end{displaymath}

\item[$(\infty)$] If $\circ\left(  r_{\infty}\right)  =v\geq2,$ and $r=
\cdots + bx^{2}+\cdots$, then {\small $$E_{\infty }=\left\{
6+{12k\over m}\sqrt{1+4b}:\quad
k=0,\pm1,\ldots, \pm\frac{m}2,\quad
m=4,6,12\right\}\cap\mathbb{Z}.$$}
\medskip
\end{description}

{\it Step 2.} Find $D\neq\emptyset$ defined by
$$D=\left\{
n\in\mathbb{Z}_{+}:\quad n=\frac{m}{12}\left(
e_{\infty}-{\displaystyle\sum\limits_{c\in\Gamma^{\prime}}}
e_{c}\right)  ,\forall e_{p}\in E_{p},\quad p\in\Gamma\right\}.$$
In this case we start with $m=4$ to obtain the solution,
afterwards $m=6$ and finally $m=12$. If $D=\emptyset$, then the
differential equation is not integrable because it falls in the
case 4. Now, if $\mathrm{Card}(D)>0,$ then for each $n\in D$ with
its respective $m$, search a rational function
$$\theta={m\over 12}{\displaystyle\sum\limits_{c\in\Gamma^{\prime}}}
\frac{e_{c}}{x-c}$$ and a polynomial $S$ defined as $$S=
{\displaystyle\prod\limits_{c\in\Gamma^{\prime}}} (x-c).$$

{\it Step 3}. Search for each $n\in D$, with its respective $m$, a
monic polynomial $P_n=P$ of degree $n,$ such that its coefficients
can be determined recursively by
\begin{equation}\label{recu3}
\begin{array}{l}
P_{m}=-P,\quad  \textit{ and for  } i\in\{m,m-1,\ldots,1,0\},\\
P_{i-1}=-S\partial_xP_{i}-\left( \left( m-i\right)
\partial_xS-S\theta\right)  P_{i}-\left( m-i\right)  \left(
i+1\right)  S^{2}rP_{i+1},
\end{array}
\end{equation} and for $i=0$ the polynomial $P_{-1}$ should be identically zero, i.e., $P_{-1}\equiv 0$. If
$P$ does not exist ($P_{-1}$ is not identically zero), then the differential equation is not
integrable because it falls in Case 4. Now, if $P$ exists search
$\omega$ such that $$ {\displaystyle\sum\limits_{i=0}^{m}}
\frac{S^{i}P}{\left( m-i\right)  !}\omega^{i}=0,$$ then a solution
of the differential equation is given by $$\zeta=e^{\int
\omega},$$ where $\omega$ is solution of the previous polynomial
of degree $m$.

\section{Hamiltonian algebrization}
In this section we follow \cite{acthesis, acmowe}. We recall that
there are a lot of differential equations with coefficients that are
not rational functions. For these differential
equations it is useful, when is possible, to replace it by a new
differential equation over the Riemann sphere $\mathbb{P}^1$ (that
is, with rational coefficients). To do this, we can use a change
of variables. The equation over $\mathbb{P}^1$ is called the
\emph{algebraic form} or \emph{algebrization} of the original
equation.\\

\textbf{Hamiltonian change of variable.} A change of
variable $z=z(x)$ is called \textit{Hamiltonian} if
$(z(x),\partial_xz(x))$ is a solution curve of the autonomous
classical one degree of freedom Hamiltonian system
$$\begin{array}{l}\partial_xz=\partial_wH\\
\partial_xw=-\partial_zH\end{array}\quad \textrm{with}\quad
H=H(z,w)={w^2\over 2}+V(z),$$
for some $V\in K$, where $K$ is a differential field. Thus, $z=z(x)$
is a Hamiltonian
change of variable if there exists $\alpha\in K$ such that
$(\partial_xz)^2=\alpha(z)$. More specifically, if $z=z(x)$ is a
Hamiltonian change of variable, we can write
$\partial_xz=\sqrt{\alpha}$, which leads us to the following
notation: $\widehat{\partial}_z=\sqrt{\alpha}\partial_z$.

We can see that $\widehat{\partial}_z$ is a \textit{derivation}
because satisfy
$\widehat{\partial}_z(f+g)=\widehat{\partial}_zf+\widehat{\partial}_zg$
and the Leibnitz rules $$\widehat{\partial}_z(f\cdot
g)=\widehat{\partial}_zf\cdot g +f\cdot\widehat{\partial}_zg,\quad
\widehat{\partial}_z\left(\frac{f}{g}\right)=\frac{\widehat{\partial}_zf\cdot
g -f\cdot\widehat{\partial}_zg}{g^2}.$$ We can notice that the
chain rule is given by $\widehat{\partial}_z(f\circ
g)=\partial_gf\circ g\widehat{\partial}_z(g)\neq
\widehat{\partial}_gf\circ g\widehat{\partial}_z(g)$. The
iteration of $\widehat{\partial}_z$ is given by
$$\widehat{\partial}_z^0=1,\quad
\widehat{\partial}_z=\sqrt{\alpha}\partial_z,\quad
\widehat{\partial}_z^n=\sqrt{\alpha}\partial_z\widehat{\partial}^{n-1}_z=
\underbrace{\sqrt{\alpha}\partial_z\left(\ldots
\left(\sqrt{\alpha}\partial_z\right)\right)}_{n\text{ times }
\sqrt{\alpha}\partial_z}.$$ We call \textit{Hamiltonian Algebrization}
to the algebrization process obtained by a Hamiltonian change of
variable.\\

\textbf{Hamiltonian Algebrization Theorem \cite{acthesis,acmowe}.}
Consider the systems of linear differential equations
$[A]$ and $[\widehat{A}]$ given respectively by
$$\partial_x\mathbf{Y}=-A\mathbf{Y},\quad
\widehat{\partial}_z\widehat{\mathbf{Y}}=-\widehat{A}\widehat{\mathbf{Y}},\quad
A=[a_{ij}],\quad \widehat{A}=[\widehat{a}_{ij}],\quad
\mathbf{Y}=[y_{i1}], \quad
\widehat{\mathbf{Y}}=[\widehat{y}_{i1}],$$ where $a_{ij}\in
K=\mathbb{C}(z(x),\partial_x(z(x)))$,
$\widehat{a}_{ij}\in\mathbb{C}(z)\subseteq
\widehat{K}=\mathbb{C}(z,\sqrt{\alpha})$, $1\leq i\leq n$, $1\leq
j\leq n$, $a_{ij}(x)=\widehat{a}_{ij}(z(x))$ and
$y_{i1}(x)=y_{i1}(z(x))$. Suppose that $L$ and $\widehat{L}$ are
the Picard-Vessiot extensions of $[A]$ and $[\widehat{A}]$
respectively. If the transformation $\varphi$ is given by
$$\varphi:\begin{array}{l}x\mapsto z\\ a_{ij}\mapsto
\widehat{a}_{ij}\\y_{i1}(x)\mapsto\widehat{y}_{i1}(z(x))\\
\partial_x\mapsto\widehat{\partial}_z\end{array},$$
then the following statements hold:
\begin{enumerate}
\item $K\simeq \widehat{K}$,\quad $(K,\partial_x)\simeq
(\widehat{K},\widehat{\partial}_z).$
\item $\mathrm{DGal}(L/K)\simeq\mathrm{DGal}(\widehat{L}/\widehat{K})\subset
\mathrm{DGal}(\widehat{L}/{\mathbb{C}(z)}).$
\item $(\mathrm{DGal}(L/K))^0\simeq
(\mathrm{DGal}(\widehat{L}/{\mathbb{C}(z)})^0.$
\end{enumerate}

\medskip

\medskip

A natural example of Hamiltonian Algebrization, and for instance of
the introduction of the new derivative $\widehat{\partial}_z$, is the
case of second order linear differential equations. Consider
$\partial_x^2y+a\partial_xy+by=0$, using
$\varphi$ we obtain
$\widehat{\partial}_z^2\widehat{y}+\widehat{a}\widehat{\partial}_z\widehat{y}+\widehat{b}\widehat{y}=0$,
which is equivalent to
\begin{equation}\label{eqalgen}
\alpha\partial_z^2\widehat{y}+\left({\partial_x\alpha\over
2}+{\sqrt{\alpha}\widehat{a}
}\right)\partial_z\widehat{y}+\widehat{b}\widehat{y}=0,
\end{equation} where $y(x)=\widehat{y}(z(x))$, $\widehat{a}(z(x))=a(x)$ and
$\widehat{b}(z(x))=b(x)$.\\

In general, for $y(x)=\widehat{y}(z(x))$, the equation
$F(\partial_x^ny,\ldots,y,x)=0$ with coefficients given by
$a_{i_k}(x)$ is transformed in the equation
$\widehat{F}(\widehat{\partial}_z^n\widehat{y},\ldots,\widehat{y},z)=0$
with coefficients given by $\widehat{a}_{i_k}(z)$, where
$a_{i_k}(x)=\widehat{a}_{i_k}(z(x))$. In particular, for
$\sqrt{\alpha}\in\mathbb{C}(z)$ and $\widehat{a}_{i_k}\in\mathbb{C}(z)$, the equation
$\widehat{F}(\widehat{\partial}_z^n\widehat{y},\ldots,\widehat{y},z)=0$
is the Hamiltonian algebrization of
$F(\partial_x^ny,\ldots,y,x)=0$. Now, if each derivation
$\partial_x$ has order even, with $\alpha(z)$ and $\widehat{a}_{i_k}(z)$
being rational functions, then the equation
$F(\partial_x^ny,\ldots,y,x)=0$ admits an Hamiltonian Algebrization for $z=z(x)$. An example, that illustrate this, is given by the following linear differential equation:
$$\partial_x^{2n}y+a_{n-1}(x)\partial_x^{2n-2}y+\ldots+a_2(x)\partial_x^4y+a_1(x)\partial_x^2y+a_0(x)y=0.$$

\textbf{Hamiltonian Algebrization Algorithm Theorem
\cite{acthesis,acmowe}.} In general is very difficult to find a
suitable Hamiltonian change of variable, for this reason Hamiltonian
Algebrization is a method or procedure (not an algorithm!). For
specific families of differential equations we can obtain algorithms
to apply Hamiltonian Algebrization, for example \emph{any differential
equation
$$F(\partial_x^ny,\partial_x^{n-1}y,\ldots,\partial_xy,y;
e^{\lambda_1 x},\ldots,e^{\lambda_k x} )=0,\quad
\mu=p_i\lambda_i,\quad p_i\in\mathbb{Z},\quad 1\leq i\leq k,$$ admits
Hamiltonian Algebrization if and only if $${\lambda_i\over
\lambda_j}\in \mathbb{Q^*},\quad 1\leq i\leq k,\quad 1\leq j\leq k$$
and by means of the Hamiltonian change of variable $z=e^{\mu
x}$ we obtain the equation
$$\widehat{F}(\widehat{\partial}_z^n\widehat{y},\widehat{\partial}_z^{n-1}\widehat{y},\ldots,\widehat{\partial}_z\widehat{y},y;
z)=0$$ with coefficients in $\mathbb{C}(z)$.}

\section*{Acknowledgements}
The first author was partially support by Marie Curie Fellowship Cofund UNITE during his stay at Technical University of Madrid and after by Universidad del Norte. First and third authors are partially supported by the MICIIN/FEDER grant number MTM2009-06973 and by the
 Generalitat de Catalunya grant number 2009SGR859.
 All the authors acknowledge to the anonimous referees by their useful comments and suggestions.

\medskip
% The data information below will be filled by AIMS editorial staff
Received xxxx 20xx; revised xxxx 20xx.
\medskip

\end{document}